\setlist[enumerate,1]{label=(\roman*)}
\newcommand{\Cay}[2]{{\rm Cay}(#1,#2)}
\newcommand{\F}{\mathbb{F}_2}
\newcommand{\scrC}{\mathscr{C}}
\newcommand{\vspan}[1]{{\rm rowspan}(#1)}
\newcommand{\ann}[1]{{\rm Ann}(#1)}
\renewcommand{\it}{\textit}
\renewcommand{\ll}{\langle}
\newcommand{\rr}{\rangle}
\newcommand{\Null}[1]{{\rm Null}(#1)}
\theoremstyle{plain}
\newtheorem{theorem}{Theorem}[section]
\newtheorem{corollary}[theorem]{Corollary}
\newtheorem{lemma}[theorem]{Lemma}
\newtheorem{proposition}[theorem]{Proposition}
\newtheorem{definition}{Definition}[section]
\title{\MakeUppercase{\large construction of storage codes of rate approaching one on triangle-free graphs}}
\author{Hexiang Huang\qquad   Qing Xiang\thanks {Research partially supported by the National Natural Science Foundation of China grant 12071206, 12131011, 12150710510, and the Sino-German Mobility Programme M-0157}
}
\date{December 29, 2022}
\begin{document}

\maketitle

\begin{abstract}
    Consider an assignment of bits to the vertices of a connected graph $\Gamma(V, E)$ with the property that the value of each vertex is a function of the values of its neighbors. A collection of such assignments is called a storage code of length $|V|$ on $\Gamma$. In this paper we construct an infinite family of binary linear storage codes on triangle-free graphs with rates arbitrarily close to one. 
\end{abstract}

$\textbf{\small\it{Keywords.}}$
Storage code, Cayley graph, Group algebra, Annihilator

\section{Introduction}

Let $\Gamma$ be a simple graph with $n$ vertices and $\scrC$ a binary code of length $n$ with coordinates indexed by the vertices of $\Gamma$. We say that $\scrC$ is a $\it{storage code}$ on $\Gamma$ if for each codeword $c \in \scrC$, every coordinate of $c$ can be uniquely determined by the values of its neighbors. 

Given a graph $\Gamma$, we can define a storage code on it: Assume $\Gamma$ has $n$ vertices, say $v_1,v_2,\ldots,v_n$. Let $A(\Gamma)$ be the adjacency matrix of $\Gamma$ with the rows and columns labeled by $v_1,v_2,\ldots,v_n$. Let $H = A(\Gamma)+I$ and $\scrC$ be the linear code over $\F$ with parity-check matrix $H$. Let $c=(c_1,c_2,\ldots,c_n)\in \scrC$ be a codeword. For the $v_i^{\rm th}$ entry of $c$, namely $c_i$, row $v_i$ of $H$ gives rise to a linear equation, $c_i=\sum_{v_j \in N(v_i)}c_j$, where $N(v_i)$ is the set of neighbors of $v_i$ in $\Gamma$; hence $c_i$ can be recovered by the values of its neighbors. Storage codes on graphs were introduced by Mazumdar \cite{Mazumdar2014StorageCO, Mazumdar2017StorageCA} and Shanmugam and Dimakis \cite{Shanmugam2014BoundingMU}. The authors of \cite{cameron2014guessing} and \cite{chris2011guessing} also introduced the concept of storage codes on graphs, although in a different guise. We will only consider binary linear storage codes. The rate of a storage code $\scrC$, denoted by $R(\scrC)$, is simply the ratio of its dimension to the dimension of the ambient space. If we have a family of storage codes $\scrC_r$, where $r$ is a parameter, assuming $\lim_{r\to \infty}R(\scrC_r)$ exists, then this limit is called the rate of the family.

It is easy to construct a storage code of rate close to one: Let $\Gamma$ be the  complete graph on $n$ vertices, and $\scrC$ the binary linear code defined by a single parity-check equation $\sum_{i=1}^{n}x_i=0$. Then $\scrC$ is a storage code on $\Gamma$ and of rate $1-1/n$.

In the above example, the graph used to obtain the storage code of rate close to one is very dense (in fact, as dense as possible), and contains a large number of cliques. It is therefore natural to consider the question of the largest attainable rate of storage codes on graphs that contain no cliques, i.e., triangle-free graphs.

Constructing storage codes of high rate on such graphs represents a challenge. Consider a densest triangle-free graph, namely a complete bipartite graph $K_{m,n}$ and a storage code $\scrC$ on it. There are two independent vertex sets of $K_{m,n}$. For each vertex, we can recover the message on it from the ones in the other (vertex) independent set, so $R(\scrC)\leq 1/2$. In early studies \cite{chris2011guessing}, the authors had conjectured that for triangle-free graphs, $R = 1/2$ is the largest attainable rate value. Later on this conjecture was refuted in \cite{cameron2014guessing} by some isolated examples.


Recently Barg and Zémor \cite{barg2022high} presented four infinite families of storage codes on triangle-free graphs. They used the \textit{Cayley graph method}: Let $S$ be a subset of $\F^r$ such that $0\notin S$ and any three vectors in $S$ are linearly independent. Then $\Gamma = \Cay{\F ^r}{S}$ is triangle-free. Let $H = A(\Gamma) + I$ and  the binary linear code $\scrC$ be defined by using $H$ as a parity-check matrix. Then we obtain a storage code $\scrC$ on the triangle-free graph $\Gamma$. By this method, a subset $S\subseteq \F^r$ will give rise to a triangle-free graph and a storage code on it. 

\vspace{0.1in}

\noindent\textbf{Barg and Zémor's Constructions} \cite{barg2022high}
\begin{itemize}
    \item \textit{The Clebsch family:} This is an infinite family of storage codes on generalized Clebsch graphs having rates above and decreasing to $1/2$.
    
    \item \textbf{\textit{The Hamming family:}} To construct this family, start with the parity-check matrix of the extended binary Hamming code of length $2^{r-1}$, augment it with an all-zero column, and then add a row of weight $2$ with the first entry and the last entry being 1. Denote the resulting $(r+1)\times n$ matrix by $M_r$. For instance, for $r=4$ we obtain the matrix
    \begin{equation*}
        M_4 = \left[ \begin{array}{cccccccc|c}
         0 & 0 & 0 & 0 & 1 & 1 & 1 & 1 & 0 \\
         0 & 0 & 1 & 1 & 0 & 0 & 1 & 1 & 0 \\
         0 & 1 & 0 & 1 & 0 & 1 & 0 & 1 & 0 \\
         1 & 1 & 1 & 1 & 1 & 1 & 1 & 1 & 0 \\
         \hline
         1 & 0 & 0 & 0 & 0 & 0 & 0 & 0 & 1
         \end{array}  \right].
    \end{equation*}
    Let $S$ be the set of column vectors of $M_r$. The rates of this family of codes are increasing to $3/4$. We will give a new proof for this fact in Theorem \ref{hammingRate} and then generalize this family.
    
    \item \textit{The BCH family:} Let $B_s$ be a parity-check matrix of the BCH code and $S$ the column set of $B_s$. It remains an open problem that whether the rate of this family could reach one. But by using a computer, it is shown that the rate increases as $s$ varies from $4$ to $8$, especially when $s=8$ the rate is $0.8196$, which represents the largest known rate of storage codes on triangle-free graphs at the time of writing of \cite{barg2022high}.

    \item \textit{The Reed-Muller family:} Let $M$ be a matrix composed of rows which are the evaluation vectors of linear and quadratic Boolean polynomial functions. Let $S$ be the column set of $M$. It is a family perfectly satisfying the necessary conditions of having high rate; but we do not know the rate of this family.
    
\end{itemize}

The rates of first two families were computed by using matrix method \cite{barg2022high}. In addition to the constructions, the authors of \cite{barg2022high} also formulated necessary conditions for the code rate to be high. 

\begin{theorem}[\cite{barg2022high}]
    \label{NeceCon}
    Let $\Gamma = \Cay{\F ^r}{S}$ with $S$ not containing the zero vector. Let $M$ be the $r \times n$ matrix whose columns are made up of the elements of $S$ and $\mathscr{C}$ the binary code with parity-check matrix $H=A(\Gamma)+I$. Then
    \begin{enumerate}
        \item If $R(\mathscr{C})>1/2$, then $n$ is odd and all the rows of $M$ have even weight;
        \item If $R(\mathscr{C})>(2^k-1)/2^k, k = 2,3,\ldots$ , then any $k$ rows of $M$ have even intersection.
    \end{enumerate}
    
\end{theorem}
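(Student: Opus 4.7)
The plan is to move to the group algebra $R := \F[\F^r]$, which admits the local-ring presentation
$$R \;\cong\; \F[y_1,\ldots,y_r]/(y_1^2,\ldots,y_r^2)$$
by setting $y_i := e_{\epsilon_i}+1$, where $\epsilon_i$ is the $i$-th standard basis vector of $\F^r$ and $\{e_g\}_{g\in\F^r}$ is the standard basis of $R$. Under this identification each $\sigma\in\F^r$ becomes $\prod_{i:\sigma_i=1}(1+y_i)$, so writing $y^A := \prod_{i\in A} y_i$ we get
$$H \;=\; \sum_{t\in T} \prod_{i:t_i=1}(1+y_i) \;=\; \sum_{A\subseteq [r]} c_A\, y^A,$$
where $T = S \cup \{0\}$ and $c_A \equiv |\{t \in T : t_i = 1 \text{ for all } i\in A\}| \pmod 2$. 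In particular $c_\emptyset \equiv |S|+1\pmod 2$, and for $|A| = k \geq 1$, $c_A$ is precisely the parity of the intersection of the $k$ rows of $M$ indexed by $A$. Multiplication by $H$ on $R$ is represented by the matrix $A(\Gamma)+I$ in the basis $\{e_g\}$, so $\scrC = \ann{H}$; rank-nullity then gives $\dim\scrC = 2^r - \dim(HR)$, and consequently $R(\scrC) > (2^k-1)/2^k$ is equivalent to $\dim(HR) < 2^{r-k}$.

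The technical core I would aim for is the following lemma: \emph{if $c_A \neq 0$ for some $A$ with $|A|=j$, then $\dim(HR) \geq 2^{r-j}$.} Granting this, both parts of the theorem follow by contraposition: the hypothesis $R(\scrC) > (2^k-1)/2^k$ forces $c_A = 0$ for every $|A| \leq k$. For $k = 1$ this is $|S|$ odd (from $c_\emptyset = 0$) together with every row of $M$ having even weight (from $c_A = 0$ when $|A|=1$); for $k \geq 2$ it yields even intersection for any $k$ rows (from $c_A = 0$ when $|A|=k$).

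To prove the lemma I would exhibit $2^{r-j}$ linearly independent vectors in $HR$, namely $\{H y^B : B \subseteq A^c\}$. A direct expansion gives
$$[y^{A\cup B}]\bigl(H y^{B'}\bigr) \;=\; \begin{cases} c_{(A\cup B)\setminus B'}, & B' \subseteq A\cup B,\\ 0, & \text{otherwise},\end{cases}$$
and since $B' \subseteq A^c$ the first condition forces $B' \subseteq B$, with the $B' = B$ entry being $c_A \neq 0$. In a hypothetical relation $\sum_{B' \subseteq A^c} \lambda_{B'} H y^{B'} = 0$ I would induct on $|B|$: the base $B = \emptyset$ yields $\lambda_\emptyset c_A = 0$, and assuming $\lambda_{B'} = 0$ for all $B' \subsetneq B$, the coefficient of $y^{A\cup B}$ in the sum collapses to $\lambda_B c_A$, giving $\lambda_B = 0$.

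I expect the main obstacle to lie in pinpointing this particular spanning family and verifying its triangular structure: one has to notice that indexing by $B \subseteq A^c$ produces exactly the leading monomial $y^{A\cup B}$, and then carefully track which $B'$ contribute to each leading coefficient. Once the lemma is in place, translating back to the stated conditions on $M$ via rank-nullity is completely mechanical.
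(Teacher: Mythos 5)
The paper does not actually prove Theorem~\ref{NeceCon}; it is quoted from Barg and Z\'emor \cite{barg2022high}, so there is no internal proof to compare against. Your argument, however, is correct, and it is pleasantly consonant with the algebraic machinery this paper \emph{does} set up: your ring $\F[y_1,\ldots,y_r]/(y_1^2,\ldots,y_r^2)$ with $y_i=e_{\epsilon_i}+1$ is exactly $P_r={\F[x_1,\ldots,x_r]}/{\langle x_i^2-1\rangle}$ under $y_i=x_i+1$, and identifying $\scrC$ with $\ann{H}$ and $\scrC^\perp$ with the principal ideal $HR$ is precisely Propositions~2.2 and~2.3. Your coefficient dictionary $H=\sum_A c_A y^A$ with $c_A$ the $\bmod\ 2$ intersection count (and $c_\emptyset=|S|+1$) is correct, as is the reduction $R(\scrC)>(2^k-1)/2^k \iff \dim(HR)<2^{r-k}$ via rank--nullity. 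The key lemma is also sound: for $B'\subseteq A^c$ one has $[y^{A\cup B}](Hy^{B'})=c_{(A\cup B)\setminus B'}$ when $B'\subseteq B$ and $0$ otherwise, giving a triangular system whose diagonal entry is $c_A\ne 0$, so $\{Hy^B:B\subseteq A^c\}$ is independent and $\dim(HR)\ge 2^{r-|A|}$; contraposing and specializing $|A|\in\{0,1\}$ for part (i) and $|A|=k$ for part (ii) recovers both claims. In effect you have reproved the Barg--Z\'emor necessary conditions entirely within the group-algebra framework that the present paper introduces only to establish its sufficiency bounds (Properties~2 and~3), which is a nice unification even if it diverges from the original ``matrix method'' the paper alludes to for \cite{barg2022high}.
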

\section{A family of storage codes on triangle-free graphs of rate one}
\label{ConstructionSection}

\begin{definition}
    Let $G$ be a finite multiplicatively  written group with identity element $e$, and let $S$ be a subset of $G$ such that $e\notin S$ and $S=S^{-1}$, where $S^{-1}=\left\{g^{-1}\ \vert\ g\in S\right\}$. The Cayley graph on $G$ with connection set $S$, denoted by $\Gamma=\Cay{G}{S}$,  is the graph with elements of $G$ as vertices, two vertices $g_1,g_2\in G$ are adjacent if and only if $g_1g_2^{-1}\in S$.
\end{definition}

In this paper we will only use the Cayley graph method to construct storage codes. Recall some notation: $S$ is a subset of $\F^r$ with $0\notin S$, $\Gamma = \Cay{\F^r}{S}$ and $H = A(\Gamma) + I$. The storage code $\scrC$ is a linear code over $\F$ with parity-check matrix $H$. Note that in $\Gamma$, each vertex represents a vector in $\F^r$ and the rows and columns of $H$ are labeled by the vectors of $\F^r$. If there is a triangle in $\Gamma$, i.e., there are three vectors $v_1,v_2,v_3$ such that $v_1-v_2,v_2-v_3,v_3-v_1 \in S$, then there are three vectors $w_1,w_2,w_3 \in S$ such that $w_1+w_2+w_3=0$. So if the sum of any three vectors in $S$ is not zero, then $\Gamma$ is triangle-free.

If we have an $S\subseteq \F^r$, by definition of Cayley graph we have
\begin{equation*}
    A(\Gamma) = (a_{uv}),\quad a_{uv}=\,
    \left\{
    \begin{aligned}
        &1,\ if\,\, v-u \in S\, ,\\
        &0,\ otherwise.
    \end{aligned}
    \right.
\end{equation*}
As $H = A(\Gamma) + I$, we have
\begin{equation}
    H = (h_{uv}),\quad h_{uv}=\,
    \left\{
    \begin{aligned}
        &1,\ if\,\, v-u \in S',\label{minus=plus}\\
        &0,\ otherwise,
    \end{aligned}
    \right.
\end{equation}
where $S'=S\cup \left\{0\right\}$. We say that $H$ is the \it{coset matrix} of $S$. It is more convenient to consider $S$ containing the zero vector. Since $H$ is a parity-check matrix of $\scrC$, $\scrC$ is the set of row vectors $c$ which satisfy $H\cdot c^\intercal = 0$. Note that in (\ref{minus=plus}), $v-u = v+u$ as we are working over $\F$, so $H$ is symmetric. Thus $\scrC = \left\{c\in \F^{2^r}\,\vert\,c\cdot H=0\right\}$, we write $\scrC=\Null{H}$, the null space of $H$.

Our purpose is to construct an $S\subseteq \F^r$ which satisfies $0 \in S$ and the sum of any three nonzero vectors in $S$ is not zero, such that the rate $R(\Null{H})$ is high. We find that if we translate $S$ in the Hamming family into a more algebraic setting, then it will suddenly become concise and easy to generalize.

The ambient space of $\scrC$ is $\F^{2^r}$, by using $c=(c_v\,\vert\,v \in \F^r)$ and $\sum_{v\in \F^r} c_vv$ interchangeably, we can view $\F^{2^r}$ as the group algebra $\F[\F^r]$. Denote the addition in the vector space $\F^r$ by ``$\oplus$'', then $\F[\F^r]$ is a ring with addition ``$+$" and multiplication ``$\oplus$". 

Note that each row of $H$ is the characteristic vector of some subset of $\F^r$, by abusing these two notions, we observe that the row $v$ of $H$ is the coset $v\oplus S$ in $\left(\F ^r,\oplus\right)$. Just as cyclic codes can be treated as principal ideals in a certain group ring, we will show that $\vspan{H} \subseteq \F[\F^r]$ is isomorphic to a principal ideal in the corresponding ring and the null space $\scrC = \Null{H}\subseteq \F[\F^r]$ is isomorphic to the annihilator of the principal ideal.
 
Let $G = \langle x_1,x_2,\ldots,x_r|x_1^2=x_2^2=\cdots =x_r^2=1\rangle$ be the elementary abelian 2-group of size $2^r$. To avoid the trouble of two addition symbols, we will use the following group isomorphism
\begin{equation*}
\begin{aligned}
    \sigma :\left(\F ^r,\oplus\right) &\rightarrow (G, \cdot )\  \\
            (a_1,a_2,\ldots ,a_r) &\mapsto x_1^{a_1}x_2^{a_2}\cdots x_r^{a_r}\ 
\end{aligned}
\end{equation*}
Note that $\sigma$ induces a ring isomorphism between group algebras 
\begin{equation*}
\begin{aligned}
    \phi :(\F[\F^r],+,\oplus) &\rightarrow (\F[G],+,\cdot) \simeq P_r  \\
            \sum_{v\in \F ^r}k_v v &\mapsto \sum_{v\in \F ^r}k_v  \sigma\left(v\right) 
\end{aligned}
\end{equation*}
where $P_r := {\F \left[x_1,x_2,\ldots ,x_r\right]}/{\langle x_1^2-1,x_2^2-1,\ldots ,x_r^2-1\rangle}$.

 Let $z$ be an element in some commutative ring $A$. Recall that the principal ideal $\ll z \rr = \left\{a\cdot z\ \vert\ a\in A \right\}$ and the annihilator $\ann{z} = \left\{ a \in A\ \vert\ a\cdot z = 0\right\}$. 
 
 Assume that $S\subseteq \F^r$ contains the zero vector. Let $f_S=\sum_{w\in S}\sigma(w)\in P_r$ and $H$ be the coset matrix of $S$. We then have the following two propositions.

\begin{proposition}
    The dual code $\scrC ^\perp = \vspan{H}$ is isomorphic to the principal ideal $\ll f_S\rr \subseteq P_r$.
\end{proposition}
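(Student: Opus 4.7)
The plan is to make the isomorphism concrete by identifying the image of each row of $H$ under the ring isomorphism $\phi$, and then showing that these images $\mathbb{F}_2$-linearly span exactly the ideal $\langle f_S\rangle$.

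First I would unwind the description of $H$. By (\ref{minus=plus}) and the assumption $0\in S$, the row of $H$ indexed by $v\in \F^r$ is the characteristic vector of the coset $v\oplus S$, i.e.\ under the identification $\F^{2^r}\leftrightarrow \F[\F^r]$ it is the element $\sum_{w\in S} (v\oplus w)$. Applying $\phi$ to this element and using that $\sigma$ is a group isomorphism from $(\F^r,\oplus)$ to $(G,\cdot)$, I get
\begin{equation*}
    \phi\Bigl(\sum_{w\in S}(v\oplus w)\Bigr)\;=\;\sum_{w\in S}\sigma(v)\sigma(w)\;=\;\sigma(v)\cdot f_S.
\end{equation*}
So $\phi$ sends the $v$-th row of $H$ to the monomial multiple $\sigma(v)\cdot f_S\in P_r$.

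Next I would take $\F$-linear spans. Since $\phi$ is an $\F$-linear bijection, it maps $\vspan{H}$ onto the $\F$-span of $\{\sigma(v)\cdot f_S : v\in \F^r\}$. Because $\{\sigma(v):v\in \F^r\}=G$ is an $\F$-basis of $P_r\simeq \F[G]$, every element $a\in P_r$ is an $\F$-linear combination of the $\sigma(v)$, hence every element $a\cdot f_S\in \langle f_S\rangle$ is an $\F$-linear combination of the $\sigma(v)\cdot f_S$. Conversely each $\sigma(v)\cdot f_S$ lies in $\langle f_S\rangle$. Therefore
\begin{equation*}
    \phi\bigl(\vspan{H}\bigr)\;=\;\mathrm{span}_{\F}\{\sigma(v)\cdot f_S:v\in \F^r\}\;=\;\langle f_S\rangle,
\end{equation*}
and since $\scrC^\perp=\vspan{H}$, this gives the desired isomorphism $\scrC^\perp\cong \langle f_S\rangle$.

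There is no real obstacle here: the argument is a bookkeeping exercise that makes precise the sentence in the excerpt ``row $v$ of $H$ is the coset $v\oplus S$.'' The only point that needs a moment's care is the step ensuring that the $\F$-span of the monomial multiples $\sigma(v)f_S$ already exhausts the full ideal $\langle f_S\rangle$, which rests on the elementary fact that $G$ is an $\F$-basis of $P_r$.
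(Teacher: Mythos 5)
Your proposal is correct and follows essentially the same route as the paper: both identify the image of $\vspan{H}$ under the ring isomorphism $\phi$ by computing what $\phi$ does to the rows $h_v$ (the paper phrases this as $\phi'(a\oplus h_0)=\phi(a)\cdot f_S$; you phrase it as $\phi(h_v)=\sigma(v)\cdot f_S$), and then observe that these images span the whole ideal $\ll f_S\rr$. The only cosmetic difference is that the paper first rewrites $\vspan{H}$ as $\{a\oplus h_0 : a\in\F[\F^r]\}$ inside $\F[\F^r]$ before applying $\phi$, whereas you push each row through $\phi$ directly and take spans afterward.
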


\begin{proof}
    Denote the row indexed by $v\oplus S$ in $H$ by $h_v$, $v\in \F ^r$. Note that $h_v$ is a vector in $\F^{2^r}$, and so an element in $\F[\F^r]$; also $h_0 = \sum_{w\in S}{w}$ and $ h_v = \sum_{w \in S}{v\oplus w} = v\oplus h_0$.
    Hence
    \begin{equation*}
        \begin{aligned}
        \vspan{H} &= \left\{\sum_{ v \in \F^r}{k_v h_v } \,\vert \,k_v \in \F \right\}
        = \left\{\sum_{v \in \F^r}{k_v \left(v\oplus h_0\right)} \,\vert \, k_v \in \F \right\} \\
        &= \left\{\left(\sum_{v \in \F^r}{k_v v}\right)\oplus h_0 \,\vert \,k_v \in \F \right\}
        = \left\{ a\oplus h_0\,\vert \,a \in \F[\F^r] \right\}.
     \end{aligned}
    \end{equation*}

    For each $a\oplus h_0 \in \vspan{H}$, we define
    \begin{equation*}
    \begin{aligned}
        \phi'\left(a\oplus h_0\right) &= \phi\left(a\oplus h_0\right) = \phi\left(a\right) \cdot \phi\left(h_0\right) \\
        &= \phi\left(a\right) \cdot \phi\left(\sum_{w\in S}{w}\right) = \phi\left(a\right) \cdot \sum_{w \in S}{\sigma\left(w\right)}=\phi(a)\cdot f_S.
    \end{aligned}
    \end{equation*}
As $\phi'(a\oplus h_0) \in \ll f_S\rr$, the map $\phi'$ is well-defined. Furthermore the map $\phi'$ is linear and injective since it is the restriction of $\phi$ to $\scrC^\perp $. It is also surjective: $\phi$ is bijective, so it has an inverse. Then for each $g\in P_r$, we have $\phi'(\phi^{-1}(g)\oplus h_0)=\phi(\phi^{-1}(g))\cdot f_S=g\cdot f_S$, so $g\cdot f_S$ has an inverse image $\phi^{-1}(g)\oplus h_0 \in \vspan{H}$. We conclude that $\phi'$ is a linear bijection.
    
\end{proof}

\begin{proposition}
    The code $\scrC = \Null{H}$ is isomorphic to $\ann{f_S} \subseteq P_r$.
\end{proposition}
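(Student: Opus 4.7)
The plan is to show that the $\F$-linear bijection $\phi:\F[\F^r]\to P_r$ restricts to a bijection from $\scrC=\Null{H}$ onto $\ann{f_S}$. Since $\phi$ is already known to be a linear bijection, I only need to establish the equivalence
$$
c\in\Null{H}\quad\Longleftrightarrow\quad \phi(c)\cdot f_S=0\text{ in }P_r
$$
for every $c\in\F[\F^r]$. Granted this, the restriction $\phi|_{\scrC}$ is an injective linear map whose image is exactly $\ann{f_S}$, which is the required isomorphism.

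First I would unpack the matrix condition $c\,H=0$. Writing $c=\sum_{u\in\F^r}c_u u$, the entry of $cH$ indexed by $v$ is $\sum_u c_u h_{v,u}$. From the formula (\ref{minus=plus}) we have $h_{v,u}=1$ iff $u\in v\oplus S$, so
$$
(cH)_v=\sum_{w\in S}c_{v\oplus w}.
$$
Hence $c\in\Null{H}$ is equivalent to $\sum_{w\in S}c_{v\oplus w}=0$ for every $v\in\F^r$.

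Next I would compute $\phi(c)\cdot f_S$ directly inside $P_r$. Since $f_S=\sum_{w\in S}\sigma(w)$ and $\phi(c)=\sum_u c_u\sigma(u)$, bilinearity of the product and the identity $\sigma(u)\sigma(w)=\sigma(u\oplus w)$ (which is the whole point of the group isomorphism $\sigma$) give
$$
\phi(c)\cdot f_S=\sum_{u\in\F^r}\sum_{w\in S}c_u\,\sigma(u\oplus w).
$$
Reindexing by $v=u\oplus w$ for each fixed $w$, the coefficient of $\sigma(v)$ in this sum is $\sum_{w\in S}c_{v\oplus w}$, which matches $(cH)_v$ computed above. Therefore $\phi(c)\cdot f_S=0$ in $P_r$ if and only if all these coefficients vanish, i.e.\ if and only if $c\in\Null{H}$.

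I don't expect any deep obstacle; the main thing to be careful about is the clash of operations, since in $\F[\F^r]$ the ``multiplication'' is the vector addition $\oplus$ on $\F^r$, while in $P_r$ it is the honest polynomial multiplication reduced modulo $\langle x_i^2-1\rangle$. Once the index-chasing in the convolution is done cleanly, surjectivity of $\phi|_{\scrC}$ onto $\ann{f_S}$ is automatic: given $g\in\ann{f_S}$, the element $c:=\phi^{-1}(g)\in\F[\F^r]$ satisfies $\phi(c)\cdot f_S=g\cdot f_S=0$, so by the equivalence $c\in\Null{H}=\scrC$, and $\phi(c)=g$.
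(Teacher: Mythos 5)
Your proof is correct and follows essentially the same route as the paper: you show that $\phi$ carries the condition $cH=0$ to the condition $\phi(c)\cdot f_S=0$ and conclude that the restriction is a bijection. The only cosmetic difference is that the paper first rewrites $\Null{H}$ as $\{a\in\F[\F^r]\mid a\oplus h_0=0\}$ and then invokes the ring-homomorphism property of $\phi$, whereas you verify the matching of coefficients of $\sigma(v)$ directly; both amount to the same convolution computation.
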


\begin{proof}
    For each $a \in \Null{H}$, we define $\phi''(a) = \phi(a)$. Note that
    \begin{equation*}
    \begin{aligned}
        \Null{H} &= \left\{ a \in \F^{2^r} \,\vert \, \sum_{v \in \F^r}{a_v  h_v} = 0 \right\} 
        = \left\{ a \in \F^{2^r} \,\vert \, \sum_{v \in \F^r}{a_v \left(v\oplus h_0\right)} = 0\right\} \\
        &= \left\{ a \in\F^{2^r} \,\vert \, \left(\sum_{v \in \F^r}{a_v v}\right) \oplus h_0 = 0 \right\} 
        = \left\{ a \in\F[\F^r] \,\vert \, a \oplus h_0 = 0 \right\}.
    \end{aligned}
    \end{equation*}
    Note that in the last equality, we change from the linear space $\F^{2^r}$ to the group algebra $\F[\F^r]$.
    
    We can verify that $\phi''$ is well-defined: For each $a \in \Null{H},\ \phi''(a)\cdot f_S = \phi(a) \cdot \phi(h_0) = \phi(a\oplus h_0) = 0$ and thus $\phi''(a) \in \ann{f_S}$. $\phi''$ is linear and injective as it is the restriction of $\phi$ to $\Null{H}$. It is also surjective: For each $g = \sum_{m\in G}{k_mm} \in \ann{f_S}$, we can find the inverse image $\sum_{m\in G}{k_m\sigma^{-1}(m)}$ in $\Null{H}$. Hence $\phi''$ is a linear bijection.
\end{proof}

Now we can work in the quotient ring $P_r = {\F \left[x_1,x_2,\ldots ,x_r\right]}/{\langle x_1^2-1,x_2^2-1,\ldots ,x_r^2-1\rangle}$. Assume that $S\subseteq \F^r$ contains the zero vector and the sum of any three nonzero vectors in $S$ is not zero. We will usually view a storage code as the annihilator of $f_S\in P_r$ without explicitly referring to the ambient graph $\Cay{\F^r}{S\setminus \left\{0\right\}}$ . Below we list some properties of $P_r$:
\begin{enumerate}
    \item\label{Property1} $\left(x_i+1\right)^2 = 0$ in $P_r$, for $i = 1,\ldots ,r$ ;
    \item\label{Property2} $R\left( \ll x_1 + 1 \rr + \ll x_2 + 1\rr + \cdots + \ll x_k + 1\rr \right) = 1 - \frac{1}{2^k} $ ;
    \item\label{Property3} $R \left(\prod_{i = 1}^{n}{\sum_{j = 1}^{k_i}{\ll x_{ij}+1\rr}}\right) 
    =\prod_{i = 1}^{n}{R\left(\sum_{j = 1}^{k_i}{\ll x_{ij}+1\rr}\right)}$ .    
\end{enumerate}
    
To prove the first property, observe that $(x_i+1)^2=x_i^2+1$ is in the ideal ${\langle x_1^2-1,x_2^2-1,\ldots ,x_r^2-1\rangle}$, so it is zero in the quotient ring $P_r$. The proofs of the other two properties are given in Section \ref{SectionProof}; the second property will be proved in Theorem \ref{RateAdd} and the third in Corollary \ref{corollary}.

Using these properties, we can give a simple proof for the fact that the Hamming family is of rate $3/4$.

\begin{theorem}[The Hamming family]
    The storage codes in Hamming family are of the form $\ann{f_r}\subseteq P_{r+1}$, where 
    \[ f_r = \left(x_r+1\right)\left(x_{r+1}+1\right)+x_r\cdot \left(x_1 + 1\right)\left(x_2 + 1\right) \cdots \left(x_{r-1} +1\right).\]
    The ambient graph is triangle-free and the rate $R\left(\ann{f_r}\right)$ approaches $3/4$ as $r$ goes to infinity .
    \label{hammingRate}
\end{theorem}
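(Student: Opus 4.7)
My plan is to compute $\dim\ll f_r\rr$ directly in the monomial basis of $P_{r+1}$ and then apply rank--nullity to the multiplication-by-$f_r$ map $g\mapsto g f_r$ on $P_{r+1}$ to recover $\dim\ann{f_r}$. I would begin by switching to the variables $y_i := x_i + 1$; by property~(\ref{Property1}) these satisfy $y_i^2=0$, so $P_{r+1}$ has the $\F$-basis $\{y_I := \prod_{i\in I} y_i \mid I\subseteq\{1,\ldots,r+1\}\}$, in which
\[ f_r = y_r y_{r+1} + y_1 y_2 \cdots y_{r-1} + y_1 y_2 \cdots y_{r-1} y_r. \]

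Next I would enumerate the spanning set $\{y_I f_r\}_I$ of $\ll f_r\rr$ by splitting on $I\cap\{r,r+1\}$. Using $y_i^2=0$ to annihilate most products, a routine case analysis shows that the non-zero generators are exactly: $y_I y_r y_{r+1}$ for each nonempty $I\subseteq\{1,\ldots,r-1\}$ (yielding $2^{r-1}-1$ distinct monomials, one of which is the top monomial $y_1\cdots y_{r+1}$); the monomial $y_1\cdots y_r$ from $I=\{r\}$; the element $y_1\cdots y_{r-1}y_{r+1}+y_1\cdots y_{r+1}$ from $I=\{r+1\}$; and $f_r$ itself from $I=\emptyset$. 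Subtracting already-produced generators reduces the last two to $y_1\cdots y_{r-1}y_{r+1}$ and $y_r y_{r+1}+y_1\cdots y_{r-1}$ respectively. These three extra vectors have monomial supports disjoint from one another and from the first family, so the whole collection is linearly independent, giving $\dim\ll f_r\rr = (2^{r-1}-1)+3 = 2^{r-1}+2$.

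Rank--nullity then delivers $\dim\ann{f_r} = 2^{r+1}-2^{r-1}-2 = 3\cdot 2^{r-1}-2$, whence
\[ R(\ann{f_r}) = \frac{3\cdot 2^{r-1}-2}{2^{r+1}} = \frac{3}{4} - \frac{1}{2^r} \longrightarrow \frac{3}{4}. \]
Triangle-freeness of the ambient Cayley graph is inherited from \cite{barg2022high}, since $S\setminus\{0\}$ here coincides (via $\phi$) with the column set of the matrix $M_r$ from Barg--Zémor's Hamming family; one can also verify it directly from the fact that the first $2^{r-1}$ columns of $M_r$ form a parity-check matrix of the extended binary Hamming code (minimum distance $4$) and have pairwise distinct patterns in rows $1,\ldots,r-1$, so the additional column $(0,\ldots,0,1)^\intercal$ cannot close a triangle either. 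The main obstacle in executing this plan is the bookkeeping for the case analysis: $f_r$ is a three-term generator and the top monomial $y_1\cdots y_{r+1}$ is produced by several choices of $I$, so one has to subtract the pure-monomial generators in the right order before the remaining two independent elements emerge.
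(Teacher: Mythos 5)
Your proposal is correct, and it takes a genuinely different route from the paper. The paper establishes only a \emph{lower} bound on the rate by exhibiting the product ideal $(\ll x_r+1\rr + \ll x_{r+1}+1\rr)(\ll x_1+1\rr + \cdots + \ll x_{r-1}+1\rr)$ inside $\ann{f_r}$ (using Property~\ref{Property1} to see each summand kills one of the two terms of $f_r$) and then invoking the rate formulas of Theorem~\ref{RateAdd} and Corollary~\ref{corollary}; the matching \emph{upper} bound comes from the necessary condition in Theorem~\ref{NeceCon}, since rows $r$ and $r+1$ of $M_r$ have odd intersection. You instead compute $\dim\ll f_r\rr$ exactly by enumerating the spanning set $\{y_I f_r\}$ in the $y$-monomial basis and apply rank--nullity; I checked the case analysis (the nonzero generators for $I\subseteq\{1,\dots,r-1\}$ nonempty give $y_I y_r y_{r+1}$, $I=\{r\}$ gives $y_1\cdots y_r$, $I=\{r+1\}$ gives $y_1\cdots y_{r-1}y_{r+1}+y_1\cdots y_{r+1}$, $I=\{r,r+1\}$ repeats the top monomial, and all other $I$ kill $f_r$) and it is right, yielding the exact value $R(\ann{f_r}) = 3/4 - 2^{-r}$, which is strictly sharper than the paper's two-sided estimate $\frac34(1-2^{1-r}) \le R \le \frac34$. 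The tradeoff is that your computation is bespoke to this $f_r$ and does not obviously scale to the denser $f_k$ of Theorem~\ref{constrution} or $f_{r,k}$ of Theorem~\ref{FinalConstruction}, where the paper's ideal-theoretic lower bound combined with the generic upper bound from Theorem~\ref{NeceCon} is far more economical; it also means you do not need, and do not get, the reusable machinery of Section~\ref{SectionProof}. One small imprecision: it is the first $r$ rows (not all $r+1$) of the first $2^{r-1}$ columns of $M_r$ that form the extended Hamming parity-check matrix; adding row $r+1$ cannot destroy the linear independence of any three columns, and for triples involving the last column $(0,\ldots,0,1)^\intercal$ your ``distinct pattern in rows $1,\ldots,r-1$'' argument closes the case, so the triangle-freeness claim stands once this is stated carefully.
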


\begin{proof}
    Let $M_r$ be the matrix constructed in the Hamming family. For example, when $r=4$, we have 
    \begin{equation*}
        M_4 = \left[ \begin{array}{cccccccc|c}
         0 & 0 & 0 & 0 & 1 & 1 & 1 & 1 & 0 \\
         0 & 0 & 1 & 1 & 0 & 0 & 1 & 1 & 0 \\
         0 & 1 & 0 & 1 & 0 & 1 & 0 & 1 & 0 \\
         1 & 1 & 1 & 1 & 1 & 1 & 1 & 1 & 0 \\
         \hline
         1 & 0 & 0 & 0 & 0 & 0 & 0 & 0 & 1
         \end{array}  \right]
    \end{equation*}
    It is clear that any three column vectors of $M_r$ are linearly independent, so the ambient graph is triangle-free. Remember that we need an $S\subseteq \F^{r+1}$ containing the zero vector, so we augment $M_r$ with a column of all-zeros to obtain $M_r'$, and let $f_r \in P_{r+1}$ be the element corresponding to $M_r'$. Then the storage code is $\ann{f_r}$. We have
    \begin{equation*}
    \begin{aligned}
        f_r &= x_{r+1}x_r + x_r\cdot \left(\sum_{\left(s_1,\ldots ,s_{r-1}\right)\neq \left(0,\ldots ,0\right)}{x_1^{s_1}x_2^{s_2}\cdots x_{r-1}^{s_{r-1}}}\right) + x_{r+1} + 1\\
        &= x_{r+1}x_r + x_r\cdot\left(\sum{x_1^{s_1}x_2^{s_2}\cdots x_{r-1}^{s_{r-1}}}\right) + x_r + x_{r+1} + 1 \\
        &= \left(x_r+1\right)\left(x_{r+1}+1\right)+x_r\cdot \left(x_1 + 1\right)\left(x_2 + 1\right) \cdots \left(x_{r-1} +1\right).
    \end{aligned}
    \end{equation*}
     Using Property \ref{Property1}, $x_i+1\in \ann{x_i+1}$, we then have an inclusion
    $$\ann{f_r} \supseteq \left(\ll x_r + 1\rr + \ll x_{r+1} +1\rr\right)\left(\ll x_1+1\rr+\ll x_2+1\rr+\cdots+\ll x_{r-1} +1\rr \right).$$
    Using Properties \ref{Property2} and \ref{Property3}, we obtain
    \begin{equation*}
    \begin{aligned}
        R\left(\ann{f_r}\right) &\geq R\left( \left(\ll x_r + 1\rr + \ll x_{r+1} +1\rr\right)\left(\ll x_1+1\rr + \ll x_2 + 1\rr+\cdots +\ll x_{r-1} +1\rr\right) \right) \\
        &= R\left( \ll x_r + 1\rr + \ll x_{r+1} +1\rr\right)\cdot R\left(\ll x_1+1\rr + \ll x_2 + 1\rr+\cdots +\ll x_{r-1} +1\rr\right) \\
        &= \frac{3}{4}\cdot \left(1-\frac{1}{2^{r-1}}\right).
    \end{aligned}
    \end{equation*}

    Observe that the last two rows of $M_r$ intersect in one position, so $R\left(\ann{f_r}\right) \leq 3/4$ by Theorem \ref{NeceCon}. Thus the limit of rates $R\left(\ann{f_r}\right)$ is $3/4$.
\end{proof}

\noindent\textbf{Remarks.}
We observe that the constructed element $f_r$ in Theorem \ref{hammingRate} is the sum of a ``short" term $(x_r+1)(x_{r+1}+1)$ and a ``long" term $x_r\cdot \left(x_1 + 1\right)\left(x_2 + 1\right) \cdots \left(x_{r-1} +1\right)$. Note that by Property \ref{Property2}
$$R(\ann{(x_r+1)(x_{r+1}+1)})\geq R(\ll x_r+1\rr +\ll x_{r+1}+1\rr) = 3/4.$$
So the ``short" term alone already achieves the rate of $3/4$. However there are monomials, $x_r,x_{r+1},x_rx_{r+1}$ (in the expansion of $(x_r+1)(x_{r+1}+1)$), which represent three linearly dependent vectors in $S$, so $(x_r+1)(x_{r+1}+1)$ will give rise to a Cayley graph with triangles. To eliminate the linear dependence, the ``long" term is needed, and if the number of additional variables, $r-1$, in the ``long" term is large then the rate value will not decrease much.

Consider $f=(x_1+1)(x_2+1)(x_3+1)\in P_3$. We have $\ann{f}\supseteq \ll x_1+1\rr + \ll x_2+1 \rr + \ll x_3+1\rr$ by Property \ref{Property1} and $R(\ann{f})\geq 7/8$ by Property \ref{Property2}. So it is easy to construct high-rate storage codes; however the ambient Cayley graph here is not triangle-free since there are many ``linearly dependent'' triples, such as $x_1,x_2,x_1x_2$.

\begin{theorem}
    \label{constrution}
    Let $\scrC_k = \ann{f_k} \subseteq P_{3k+3}$, where 
    \begin{equation*}
    \begin{aligned}
         f_k = &(x_{3k+1}+1)(x_{3k+2}+1)(x_{3k+3}+1)
         +x_{3k+1}x_{3k+2}\left(x_1+1\right)\cdots\left(x_k+1\right) \\
             &+x_{3k+1}x_{3k+3}\left(x_{k+1}+1\right)\cdots\left(x_{2k}+1\right)+x_{3k+2}x_{3k+3}\left(x_{2k+1}+1\right)\cdots\left(x_{3k}+1\right).
    \end{aligned}
    \end{equation*}
Then $\scrC_k$ is a storage code on a triangle-free graph. This family is of rate $7/8$. 
\end{theorem}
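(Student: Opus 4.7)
The plan is to (1) establish triangle-freeness by computing $S=\operatorname{supp}(f_k)$ after mod-$2$ cancellations and checking that no three nonzero elements of $S$ sum to zero; (2) lower-bound $R(\scrC_k)$ by exhibiting an explicit product of ``linear'' ideals inside $\ann{f_k}$ and invoking Properties 2 and 3; and (3) obtain the matching upper bound $R(\scrC_k)\le 7/8$ from Theorem~\ref{NeceCon}(ii).

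For step (1), I would first expand $f_k$ and track cancellations. The three degree-two monomials $x_{3k+1}x_{3k+2}$, $x_{3k+1}x_{3k+3}$, $x_{3k+2}x_{3k+3}$ each appear once in $(x_{3k+1}+1)(x_{3k+2}+1)(x_{3k+3}+1)$ and once as the ``middle-constant'' term of the corresponding long summand, so they cancel over $\F$. What remains is
\begin{equation*}
S=\{0,\,e_{3k+1},\,e_{3k+2},\,e_{3k+3},\,e_{3k+1}+e_{3k+2}+e_{3k+3}\}\sqcup S_A\sqcup S_B\sqcup S_C,
\end{equation*}
where $S_A,S_B,S_C$ each contain $2^k-1$ vectors whose last three coordinates are $(1,1,0)$, $(1,0,1)$, $(0,1,1)$ respectively and whose nonzero ``middle parts'' lie in one of the pairwise disjoint $k$-blocks $\{1,\dots,k\}$, $\{k+1,\dots,2k\}$, $\{2k+1,\dots,3k\}$. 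Three distinct nonzero vectors of $S$ summing to zero must have last-three-coordinate triples summing to zero in $\F^3$, so they label one of the seven lines of the Fano plane on the nonzero points of $\F^3$; a short case-check shows that for each such line the sum has at least one nonzero entry among the first $3k$ coordinates, so no triangle arises and the ambient Cayley graph is triangle-free.

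For step (2), introduce the ideals
\begin{equation*}
A_0=\sum_{j=1}^{3}\ll x_{3k+j}+1\rr,\qquad A_i=\sum_{j=1}^{k}\ll x_{(i-1)k+j}+1\rr\quad(i=1,2,3),
\end{equation*}
built from pairwise disjoint variable sets. Using Property~1, any product $a_0\cdot(x_{3k+1}+1)(x_{3k+2}+1)(x_{3k+3}+1)$ with $a_0\in A_0$ vanishes because each summand contains some $(x_{3k+j}+1)^2=0$, so $A_0$ annihilates the first summand of $f_k$; analogously $A_i$ annihilates the $(i+1)$-th summand of $f_k$ for $i=1,2,3$. Hence $A_0A_1A_2A_3\subseteq\ann{f_k}$, and Properties~2 and 3 give
\begin{equation*}
R(\scrC_k)\ge R(A_0A_1A_2A_3)=R(A_0)R(A_1)R(A_2)R(A_3)=\tfrac{7}{8}\bigl(1-2^{-k}\bigr)^3.
\end{equation*}

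For step (3), let $M$ be the matrix whose columns are the elements of $S\setminus\{0\}$. The three rows of $M$ indexed by coordinates $3k+1,3k+2,3k+3$ intersect in exactly one column, namely $e_{3k+1}+e_{3k+2}+e_{3k+3}$, because no vector in $S_A\cup S_B\cup S_C$ has all three last coordinates equal to $1$. This odd intersection forces $R(\scrC_k)\le 7/8$ via Theorem~\ref{NeceCon}(ii), and combining both bounds yields $\lim_{k\to\infty}R(\scrC_k)=7/8$. I expect the triangle-free verification to be the main obstacle, since it requires careful bookkeeping of the mod-$2$ cancellations in the expansion of $f_k$ and a clean case analysis over the Fano plane; the rate estimates then follow directly from the general machinery already set up.
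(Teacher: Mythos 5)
Your proof is correct and follows essentially the same route as the paper: verify triangle-freeness by a case analysis on triples from the support of $f_k$, lower-bound $R(\scrC_k)$ via the inclusion $A_0A_1A_2A_3\subseteq\ann{f_k}$ together with Properties 2 and 3, and upper-bound it by $7/8$ via Theorem \ref{NeceCon}(ii) applied to the three rows indexed by $x_{3k+1},x_{3k+2},x_{3k+3}$. The only difference is cosmetic: you organize the triangle-free check by projecting to the last three coordinates and enumerating Fano-plane lines, whereas the paper splits into ``short term'' versus ``long term'' cases and argues directly about leftover factors in the product of monomials; both are the same underlying bookkeeping.
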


\begin{proof}
    Let $S \subseteq \F^{3k+3}$ be the subset of vectors corresponding to the monomials in the fully expanded $f_k$. We need to verify that $0\in S$ and the sum of any three nonzero vectors in $S$ is not zero. 
    
    There is a $1$ in the expansion of $f_k$, so $0\in S$. Note that the sum of any three nonzero vectors in $S$ is not zero is equivalent to the product of any three monomials in the expansion of $f_k+1$ is not $1\in P_{3k+3}$. There are several cases for the choices of three monomials:
    \begin{enumerate}
    
        \item Choose all three from the survived monomials of the ``short'' term $(x_{3k+1}+1)(x_{3k+2}+1)(x_{3k+3}+1)$. Note that the even-degree terms, $1,\,x_{3k+1}x_{3k+2},\,x_{3k+1}x_{3k+3},\,x_{3k+2}x_{3k+3}$, are canceled out in the expansion of $f_k+1$, so only odd-degree terms, namely, $x_{3k+1}$, $x_{3k+2}$, $x_{3k+3}$, and $x_{3k+1}x_{3k+2}x_{3k+3}$ survive, so the product of any three of them cannot be $1$.
        
        \item Choose all three from the survived monomial terms in the expansion of some ``long" term, for example, in the expansion of $x_{3k+1}x_{3k+2}(x_1+1)\cdots(x_k+1)$. Then the product of these three monomial terms will have the factor $x_{3k+1}^3x_{3k+2}^3=x_{3k+1}x_{3k+2}$, hence it is not equal to $1\in P_{3k+3}$. 

        \item There are three more cases: Choose two from the ``short'' term and one from some ``long'' term; one from the ``short'' term and two from ``long'' terms; three from more than one ``long'' terms. For all these cases, note that the product must have degree $1$ in some variable $x_i,i=1,\ldots,3k$, hence is not equal to $1\in P_{3k+3}$.
        
    \end{enumerate}
    Consequently, the Cayley graph is triangle-free. As in the proof of Theorem \ref{hammingRate}, we have an inclusion
    \begin{equation*}
    \begin{aligned}
        \ann{f_k} \supseteq &\left(\ll x_{3k} + 1\rr +\ll x_{3k+2} +1\rr +\ll x_{3k+3}+1\rr\right)\cdot \left(\ll x_1 + 1\rr +\cdots +\ll x_k +1\rr \right) \\
         &\cdot\left(\ll x_{k+1} + 1\rr +\cdots +\ll x_{2k} +1\rr \right)\cdot\left(\ll x_{2k+1} + 1\rr +\cdots +\ll x_{3k} +1\rr \right). 
    \end{aligned}
    \end{equation*}
    Applying Properties \ref{Property2} and \ref{Property3}, we then obtain $R\left(\ann{f_k}\right) \geq \frac{7}{8}\cdot\left(1-\frac{1}{2^k}\right)^3$.
    
    Note that in the expansion of $f_k+1$, there is only one term containing $x_{3k+1}x_{3k+2}x_{3k+3}$, namely $x_{3k+1}x_{3k+2}x_{3k+3}$ itself. So by Theorem \ref{NeceCon}, $R\left(\ann{f_k}\right) \leq 7/8$. Hence this is a family of rate $7/8$.
    
\end{proof}

\noindent\textbf{Remark.}
The main idea of this construction is to use the long terms to cancel out the even-degree monomials in $(x_{3k+1}+1)(x_{3k+2}+1)(x_{3k+3}+1)$ so that the remaining terms are all of odd degree and thus the product of any three of them cannot be $1$. We can push this idea even further.

\begin{theorem}[The generalized Hamming family]
    \label{FinalConstruction}
    Let $r,k$ be positive integers and $m=2^{r-1}-1$. Then define $\scrC_{r,k} = \ann{f_{r,k}}\subseteq P_{mk+r}$, where 
    $$ f_{r,k} =(x_{mk+1}+1)(x_{mk+2}+1)\cdots (x_{mk+r}+1) +\sum_{i =0}^{m-1}h_i \prod_{j=1}^{k}(x_{ik+j}+1),$$
    and 
    $$\{h_0,h_1,\ldots,h_{m-1}\}=\left\{x_{mk+1}^{s_1}x_{mk+2}^{s_2}\cdots x_{mk+r}^{s_r} \mid (0,0,\ldots ,0)\ne (s_1,s_2,\ldots ,s_r)\in \{0,1\}^r\ and\ \sum_{i=1}^rs_i\ is\ even\right\}.$$
    
   \noindent This is a family of storage codes on triangle-free graphs, which has rate $1-\frac{1}{2^r}$ when $k$ approaches infinity.
\end{theorem}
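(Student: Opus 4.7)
The plan is to imitate the proof of Theorem \ref{constrution} but with larger building blocks, paying careful attention to which monomials of $f_{r,k}$ survive the $\F_2$ cancellations. First I would expand $f_{r,k}$ and identify its surviving monomials. The short term $\prod_{s=1}^{r}(x_{mk+s}+1)$ contributes all $2^r$ monomials in the last $r$ variables, and each long term $h_i\prod_{j=1}^{k}(x_{ik+j}+1)$ contributes $h_i\prod_{j\in T_i}x_{ik+j}$ over all $T_i\subseteq\{1,\ldots,k\}$. Since the $h_i$'s enumerate precisely the nonzero even-degree monomials in $\{x_{mk+1},\ldots,x_{mk+r}\}$, each such short-term monomial cancels against the $T_i=\emptyset$ piece of the matching long term. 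The surviving monomials of $f_{r,k}$ are therefore the constant $1$ (so $0\in S$), the $2^{r-1}$ odd-degree monomials in $x_{mk+1},\ldots,x_{mk+r}$, and the long-term monomials $h_i\prod_{j\in T_i}x_{ik+j}$ with $i\in\{0,\ldots,m-1\}$ and $\emptyset\ne T_i\subseteq\{1,\ldots,k\}$.

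Next I would verify triangle-freeness by showing no product of three distinct non-identity surviving monomials equals $1$ in $P_{mk+r}$. The key structural observation is that the variables $x_{ik+1},\ldots,x_{(i+1)k}$ of block $i$ appear only in surviving monomials arising from long term $i$, and each such monomial carries $T_i\neq\emptyset$; hence if such a monomial is selected without a second companion from the same block, some block-$i$ variable has odd multiplicity in the product and the product cannot be $1$. If all three monomials come from a single long block $i$, the three copies of $h_i$ combine into $h_i^3=h_i\neq 1$, which again forbids the product from being $1$. If all three are odd-degree monomials in $x_{mk+1},\ldots,x_{mk+r}$, then the total degree is odd while $1$ has even degree. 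A short enumeration of the remaining mixed cases reduces each to one of these three observations.

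For the lower bound on the rate I would use the ideal inclusion
$$\ann{f_{r,k}}\supseteq\Bigl(\sum_{s=1}^{r}\ll x_{mk+s}+1\rr\Bigr)\cdot\prod_{i=0}^{m-1}\Bigl(\sum_{j=1}^{k}\ll x_{ik+j}+1\rr\Bigr),$$
which is justified by noting that any generator of the form $(x_{mk+s}+1)\prod_{i=0}^{m-1}(x_{ik+j_i}+1)$ annihilates $f_{r,k}$: the factor $(x_{mk+s}+1)^2=0$ kills the short term, while $(x_{ik+j_i}+1)^2=0$ kills the $i$-th long term. Applying Properties \ref{Property2} and \ref{Property3} then gives $R(\ann{f_{r,k}})\ge(1-2^{-r})(1-2^{-k})^m$, which tends to $1-2^{-r}$ as $k\to\infty$.

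For the matching upper bound I would invoke Theorem \ref{NeceCon} by exhibiting $r$ rows of the generating matrix whose intersection is odd, namely the rows indexed by $x_{mk+1},\ldots,x_{mk+r}$, and count surviving monomials of $f_{r,k}$ that contain all of these variables. When $r$ is odd, the only such monomial is $x_{mk+1}\cdots x_{mk+r}$ itself, giving intersection $1$; when $r$ is even, only long-term monomials with $h_i=x_{mk+1}\cdots x_{mk+r}$ qualify, and there are exactly $2^k-1$ of them. Both values are odd, so Theorem \ref{NeceCon}(ii) yields $R(\ann{f_{r,k}})\le 1-2^{-r}$, and combined with the lower bound the family has rate $1-2^{-r}$ in the limit. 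The main obstacle I foresee is executing the triangle-freeness case analysis cleanly, since one must simultaneously track how each variable, both the block variables and the last-$r$ variables, contributes to each of the three chosen monomials; organizing the argument around the disjointness of the blocks is what makes the bookkeeping tractable.
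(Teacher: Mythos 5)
Your proposal is correct and follows the paper's proof strategy (which for this theorem is itself little more than a pointer to Theorem \ref{constrution}): triangle-freeness by a case analysis on the surviving monomials, the ideal inclusion $\ann{f_{r,k}}\supseteq(\sum_{s}\ll x_{mk+s}+1\rr)\prod_{i}(\sum_{j}\ll x_{ik+j}+1\rr)$ together with Properties \ref{Property2} and \ref{Property3} for the lower bound, and Theorem \ref{NeceCon} for the upper bound. You are in fact more careful than the paper on the upper bound: the paper cites ``the presence of the term $y_1\cdots y_r$,'' which is only literally true for odd $r$; for even $r$ the monomial $x_{mk+1}\cdots x_{mk+r}$ is cancelled and the rows $mk+1,\ldots,mk+r$ instead meet in the $2^k-1$ long-term columns with $h_i=x_{mk+1}\cdots x_{mk+r}$. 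Your observation that both $1$ and $2^k-1$ are odd supplies the missing detail. One small gap in your triangle-freeness sketch: your three stated observations cover exactly one, exactly three, or zero monomials drawn from a fixed long block $i$, but not the case of exactly two. That case needs the extra remark that two \emph{distinct} surviving monomials from the same block carry distinct nonempty subsets $T_i$, so the block-$i$ part of the product is the nonempty symmetric difference and some block-$i$ variable retains odd multiplicity; with that added the enumeration closes.
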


\begin{proof}
    Similar arguments to those in the proof of Theorem \ref{constrution} can show that the Cayley graph is triangle-free. Again by Theorem \ref{NeceCon}, the presence of the term $y_1y_2\cdots y_r$ implies $R(\scrC_{r,k})\leq 1-\frac{1}{2^r}$. On the other hand, the inclusion
    $$  \ann{f_{r,k}} \supseteq (\ll x_{mk+1} + 1\rr +\ll x_{mk+2} +1 \rr +\cdots + \ll x_{mk+r}+1\rr) \cdot \prod_{i=0}^{m-1}{\sum_{j=1}^{k}{\ll x_{ik+j}+1}\rr}$$
    implies that $R(\ann{f_{r,k}})\geq (1-\frac{1}{2^r})\cdot\left(1-\frac{1}{2^k}\right)^m.$
    As $k$ goes to infinity, $R\left(\scrC_{r,k}\right)$ goes to $1-\frac{1}{2^r}$. Hence this family achieves rate arbitrarily close to one if only we choose a large enough $r$ and $k$.
\end{proof}

\noindent\textbf{The Sparsity of the Constructed Graphs}
\vspace{0.1in}

Consider a family of simple connected graphs $\Gamma(V,E)$, where $V$ is the vertex set and $E$ the edge set. Let $N=|V|$. Suppose $|E|$ is a function of $N$. Note that $|E|\geq N-1$ as $\Gamma$ is connected and $|E|\leq N(N-1)/2$ as $\Gamma$ is simple; hence $|E|=O(N^t)$, where $1\leq t\leq 2$.

We want to figure out the number of edges of the Cayley graphs constructed in Theorem \ref{FinalConstruction}. Let $r$ be a fixed positive integer. Let $n=r+k(2^{r-1}-1)$ and $S$ be the subset of $\F^n$ corresponding to $f_{r,k}+1$. Then the ambient graph of the storage code $\scrC$ is $\Gamma=\Cay{\F^n}{S}$. So the number of vertices is $N = 2^n = 2^r\times 2^{k(2^{r-1}-1)}$ and thus
\begin{equation}
    \label{SparsityFormula}
    k = \frac{\log_2{N}-r}{2^{r-1}-1}.
\end{equation}
Note that $\Gamma$ is a regular graph and each vertex has degree $|S|=2^{r-1}+ (2^{r-1}-1) 2^k$. Using (\ref{SparsityFormula}), we obtain
\begin{equation*}
\begin{aligned}
    |S| &=  2^{r-1}+ (2^{r-1}-1) 2^\frac{\log_2{N}-r}{2^{r-1}-1}  \\
        &= 2^{r-1}+ (2^{r-1}-1) \left(\frac{N}{2^r}\right)^\frac{1}{2^{r-1}-1}.
\end{aligned}
\end{equation*}
Let $E$ be the edges set of $\Gamma$. Then the size of $E$ is 
\begin{equation*}
\begin{aligned}
    |E|&=\frac{N |S|}{2} = \frac{N \left(2^{r-1}+ (2^{r-1}-1) \left(\frac{N}{2^r}\right)^\frac{1}{2^{r-1}-1}   \right)}{2} \\
       &\approx \frac{2^{r-1}-1}{2^{1-{r}/(2^{r-1}-1)}} N^{1+\frac{1}{2^{r-1}-1}}
       = O(N^{1+\frac{1}{2^{r-1}-1}}).
\end{aligned}
\end{equation*}
So the magnitude of $|E|$ is $N^{1+\frac{1}{2^{r-1}-1}}$, the exponent, $1+\frac{1}{2^{r-1}-1}$, approaches $1$ as $r$ goes to infinity. Consequently, the graph family is actually quite sparse.

\section{Rates of some classical ideals}
\label{SectionProof}

Recall that in Section \ref{ConstructionSection} we constructed a family of storage codes, the rate of that family is computed mainly by using Properties \ref{Property2} and \ref{Property3}. We will prove these properties in this section.

Following the convention before, the rate of an ideal is the ratio of the dimension of the ideal to that of its ambient ring when both the ring and the ideal are viewed as vector spaces. To indicate the variables that we are using, 
we usually use $P[x_1,\ldots,x_n]$ as the abbreviation of $P_n={\F \left[x_1,x_2,\ldots ,x_n\right]}/{\langle x_1^2-1,x_2^2-1,\ldots ,x_n^2-1\rangle}$.

Note that $P_n$ is a polynomial quotient ring. So each element in $P_n$ can be represented by a polynomial on some variables. Assume two elements $f,g \in P_n$. If $f=f(x_1,\ldots,x_k)$ and $g=g(x_{k+1},\ldots,x_n)$, then we say $f$ and $g$ are on $\textit{disjoint variables}$.

\begin{lemma}
    \label{TwoBases}
    Viewing $P_n$ as a vector space over $\F$, we have two bases of $P_n$: 
    \begin{itemize}
        \item $B_1 = \left\{x_1^{s_1}\cdots x_n^{s_n}\,\vert\,s_i\in \left\{0,1\right\}\right\} ;$ 
        \item $B_2 = \left\{(x_1+1)^{s_1}\cdots (x_n+1)^{s_n}\,\vert\,s_i\in \left\{0,1\right\}\right\} .$ 
    \end{itemize}
\end{lemma}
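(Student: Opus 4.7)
The plan is to show that $B_1$ is a basis by a direct counting argument, then show that $B_2$ is related to $B_1$ by a unitriangular change-of-basis matrix, and hence is also a basis.

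First I would handle $B_1$. By the defining relations $x_i^2 = 1$, every monomial in $P_n$ reduces to a squarefree one, so $B_1$ spans $P_n$. A standard fact about these ``group algebra'' quotients (equivalently, $P_n \cong \F[\F^n]$ via the isomorphism $\sigma$ already described in Section \ref{ConstructionSection}) is that $\dim_\F P_n = 2^n = |B_1|$, so $B_1$ must also be linearly independent; hence it is a basis.

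For $B_2$, I would index both bases by subsets $T \subseteq [n]$, writing $x^T := \prod_{i\in T} x_i$ and $y^T := \prod_{i\in T}(x_i+1)$. The key expansion is
\[
 y^T = \prod_{i\in T}(x_i + 1) = \sum_{S\subseteq T} x^S,
\]
which expresses each element of $B_2$ as a sum of elements of $B_1$ indexed by subsets of $T$. Ordering the subsets of $[n]$ by any linear extension of the inclusion order, the transition matrix from $B_2$ to $B_1$ is therefore unitriangular with $1$'s on the diagonal, and in particular invertible over $\F$. Since $|B_2|=2^n=\dim_\F P_n$, this forces $B_2$ to be linearly independent and spanning, i.e.\ a basis.

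There is no real obstacle here: the argument is essentially the observation that the map $x_i \mapsto x_i+1$ is an invertible affine change of coordinates on each factor, and the induced change of basis on the $2^n$-dimensional space $P_n$ is unitriangular. If I wanted an even more self-contained argument not relying on counting dimensions, I would invert the above identity by Möbius inversion on the Boolean lattice to recover $x^T = \sum_{S\subseteq T} y^S$, showing directly that $B_1 \subseteq \vvspan{B_2}$ and hence $B_2$ spans $P_n$; independence would then again follow from $|B_2|=2^n$.
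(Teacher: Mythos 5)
Your proof is correct and follows essentially the same line as the paper's: note $B_1$ is the natural (group-element) basis, observe $|B_1|=|B_2|=\dim P_n = 2^n$, and relate the two by an explicit change of basis. The only cosmetic difference is direction — your main argument expands $y^T=\sum_{S\subseteq T}x^S$ to get a unitriangular transition matrix from $B_2$ to $B_1$, whereas the paper expands $x^T=\sum_{S\subseteq T}y^S$ (illustrated by examples such as $x_1=(x_1+1)+1$) to show $B_1\subseteq\vvspan{B_2}$; your closing remark about Möbius inversion is exactly the paper's chosen direction. Both directions, together with the dimension count, yield the result, and your version is a bit more explicit about why the transition matrix is invertible.
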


\begin{proof}
    In the group algebra $P_n$, the first basis is natural as they are exactly the group elements. Note that $B_2$ has the same size as $B_1$. So it suffices to show that every element in $B_1$ can be linearly expressed by $B_2$. This is true, for example, $x_1 = (x_1+1)+1,\,x_1x_2 = (x_1+1)(x_2+1)+(x_1+1)+(x_2+1)+1$.
\end{proof}

\begin{lemma}
    \label{FunLemma}
    If $f\neq 0$ and $g\neq 0$ are on disjoint variables, then $fg\neq 0$.
\end{lemma}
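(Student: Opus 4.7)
The plan is to use the monomial basis $B_1$ from Lemma \ref{TwoBases} and argue that multiplying elements expressed on disjoint variable sets produces distinct basis monomials, so no cancellation can occur.

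After relabeling, I may assume $f$ is a polynomial in $x_1,\ldots,x_k$ and $g$ is a polynomial in $x_{k+1},\ldots,x_n$. By Lemma \ref{TwoBases}, applied separately to the subrings $P[x_1,\ldots,x_k]$ and $P[x_{k+1},\ldots,x_n]$, I can write
\begin{equation*}
    f = \sum_{S\subseteq\{1,\ldots,k\}} a_S\,\prod_{i\in S} x_i, \qquad g = \sum_{T\subseteq\{k+1,\ldots,n\}} b_T\,\prod_{j\in T} x_j,
\end{equation*}
with $a_S, b_T \in \F$, where $f\neq 0$ means some $a_{S_0}\neq 0$ and $g\neq 0$ means some $b_{T_0}\neq 0$.

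Next I would expand the product:
\begin{equation*}
    fg \;=\; \sum_{S,T} a_S b_T \prod_{i\in S\cup T} x_i.
\end{equation*}
The key observation is that since $S\subseteq\{1,\ldots,k\}$ and $T\subseteq\{k+1,\ldots,n\}$ lie in disjoint index ranges, the union $S\cup T$ uniquely determines the pair $(S,T)$. Therefore the monomials $\prod_{i\in S\cup T} x_i$ that appear on the right-hand side are pairwise distinct elements of the basis $B_1$ of $P_n$. Consequently $fg=0$ in $P_n$ forces $a_S b_T = 0$ for every pair $(S,T)$; but then taking $S=S_0$ yields $b_T = 0$ for all $T$, contradicting $g\neq 0$. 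Hence $fg\neq 0$.

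There is no real obstacle here: the entire argument rests on the fact that the elementary abelian 2-group algebra $P_n$ decomposes (as an $\F$-algebra) into a tensor product along disjoint subsets of generators, and the monomial basis respects this decomposition so that no two products $\prod_{i\in S}x_i\cdot\prod_{j\in T}x_j$ can collide. The only thing requiring a little care is ensuring that the normalization implicit in $P_n$ (the relations $x_i^2=1$) is not triggered, which is exactly what the disjointness of variables guarantees.
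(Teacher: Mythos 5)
Your proof is correct and follows essentially the same route as the paper's: expand $f$ and $g$ in the monomial basis $B_1$ restricted to their respective disjoint variable sets, and observe that because the index sets are disjoint, the resulting monomials in the product $fg$ remain distinct elements of $B_1$, so no cancellation is possible. The paper phrases this as picking one nonzero coefficient product $a_{s_1,\ldots,s_k}b_{s_{k+1},\ldots,s_n}\neq 0$ and invoking linear independence of $B_1$, whereas you phrase it as a contradiction from $fg=0$; these are the same argument.
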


\begin{proof}
    Assume the ambient ring is $P[x_1,\ldots,x_n]$ and 
    \[f=\sum{a_{s_1,\ldots,s_k} x_1^{s_1}\cdots x_k^{s_k}},g =\sum{b_{s_{k+1},\ldots,s_n}x_{k+1}^{s_{k+1}}\cdots x_n^{s_n}},\]
    where the coefficients $a_{s_1,\ldots,s_k},b_{s_{k+1},\ldots,s_n} \in \F$ are not all zero, respectively. There must be some product $a_{s_1,\ldots,s_k}b_{s_{k+1},\ldots,s_n} \neq 0$. Thus $fg = \sum{a_{s_1,\ldots,s_k}b_{s_{k+1},\ldots,s_n}x_1^{s_1}\cdots x_k^{s_k}x_{k+1}^{s_{k+1}}\cdots x_n^{s_n}\cdots x_n^{s_n}} \neq 0$ since $\left\{x_1^{s_1}\cdots x_n^{s_n}\,\vert\,s_i\in \left\{0,1\right\}\right\} $ are linearly independent over $\F$.
\end{proof}

\begin{lemma}
    \label{Annhilator}
    $\ann{x_i+1} = \ll x_i+1 \rr$, for all $i=1,\ldots,n$.
\end{lemma}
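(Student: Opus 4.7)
\medskip

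The plan is to prove the two inclusions separately. The inclusion $\ll x_i+1\rr \subseteq \ann{x_i+1}$ is immediate from Property \ref{Property1}: for any $a\in P_n$, we have $a(x_i+1)\cdot (x_i+1)=a(x_i+1)^2=0$, so $a(x_i+1)\in \ann{x_i+1}$.

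For the reverse inclusion $\ann{x_i+1}\subseteq \ll x_i+1\rr$, the idea is to use the basis $B_2$ from Lemma \ref{TwoBases} to isolate the contribution of the factor $(x_i+1)$. Given $f\in \ann{x_i+1}$, I would expand $f$ in the basis $B_2$ and split the expansion according to whether the exponent of $(x_i+1)$ is $0$ or $1$, writing
\[ f = f_0 + (x_i+1) f_1, \]
where $f_0$ and $f_1$ are $\F$-linear combinations of the basis elements $\prod_{j\neq i}(x_j+1)^{s_j}$, and hence lie in the subring $P[x_1,\ldots,\hat{x_i},\ldots,x_n]$ generated by the variables other than $x_i$. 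Multiplying by $x_i+1$ and using $(x_i+1)^2=0$ (Property \ref{Property1}), the second piece is killed and we obtain $f(x_i+1)=f_0(x_i+1)$.

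The hypothesis $f(x_i+1)=0$ then forces $f_0(x_i+1)=0$. Since $f_0$ involves only the variables $\{x_j:j\ne i\}$ while the factor $x_i+1\ne 0$ involves only $x_i$, these are on disjoint variables, so Lemma \ref{FunLemma} gives $f_0=0$. Therefore $f=(x_i+1)f_1\in \ll x_i+1\rr$, completing the proof. The only mild subtlety is making sure the splitting $f=f_0+(x_i+1)f_1$ with $f_0,f_1$ free of $(x_i+1)$ is well defined, but this follows at once from the linear independence of $B_2$.
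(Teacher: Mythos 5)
Your proof is correct and takes essentially the same approach as the paper's: expand $f$ in the basis $B_2$, use $(x_i+1)^2=0$ to isolate the part free of the factor $(x_i+1)$, and apply Lemma~\ref{FunLemma} (disjoint variables) to conclude that part vanishes. Your notation $f=f_0+(x_i+1)f_1$ is simply a tidier repackaging of the paper's coefficient-level computation.
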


\begin{proof}
    We will only consider the case where $i=1$. The other cases can be done in the same way. The inclusion ``$\supseteq$" is obvious. For $f\in P_n$, we can write $f = \sum{a_{s_1,\ldots,s_n}(x_1+1)^{s_1}\cdots (x_n+1)^{s_n}}$ by Lemma \ref{TwoBases}. If $f\in \ann{x_1+1}$, then
    \begin{equation*}
    \begin{aligned}
        (x_1+1)f &= (x_1+1)\sum{a_{s_1,\ldots,s_n}(x_1+1)^{s_1}(x_2+1)^{s_2}\cdots (x_n+1)^{s_n}} \\
                 &= (x_1+1)\sum_{s_1=0}{a_{s_1,\ldots,s_n}(x_1+1)^{s_1}(x_2+1)^{s_2}\cdots (x_n+1)^{s_n}}\\
                 &= (x_1+1)\sum{a_{0,s_2,\ldots,s_n}(x_2+1)^{s_2}\cdots (x_n+1)^{s_n}}=0.
    \end{aligned}
    \end{equation*}
    By Lemma \ref{FunLemma} it follows that $\sum{a_{0,s_2,\ldots,s_n}(x_2+1)^{s_2}\cdots (x_n+1)^{s_n}}=0$. By Lemma \ref{TwoBases} each coefficient $a_{0,s_2,\ldots,s_n}=0$. Hence $f = \sum{a_{1,s_2,\ldots,s_n}(x_1+1)(x_2+1)^{s_2}\cdots (x_n+1)^{s_n}}$, and thus $f\in \ll x_1+1\rr$.
     
\end{proof}

\begin{proposition}
    \label{SimpleCase}
    The rate of the ideal $ \ll x_1+1\rr\ll x_2+1\rr\cdots \ll x_k+1\rr $ is $ {1}/{2^k}$, for $1\leq k\leq n$.
\end{proposition}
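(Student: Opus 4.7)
The plan is to work in the $y$-basis from Lemma \ref{TwoBases}, where $y_i := x_i+1$. Since we are in a commutative ring, the product of principal ideals equals the principal ideal of the product, so
\[ \ll x_1+1\rr\ll x_2+1\rr\cdots\ll x_k+1\rr = \ll y_1 y_2 \cdots y_k\rr. \]
The rate of an ideal is its $\F$-dimension divided by $2^n = \dim P_n$, so it suffices to show $\dim \ll y_1\cdots y_k\rr = 2^{n-k}$.

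By Lemma \ref{TwoBases}, every element of $P_n$ is an $\F$-linear combination of the monomials $y_1^{s_1}\cdots y_n^{s_n}$ with $s_i\in\{0,1\}$. Now I would use Property \ref{Property1}, which gives $y_i^2 = 0$ in $P_n$: multiplying $y_1\cdots y_k$ against any basis monomial annihilates it whenever some $s_i=1$ for $i\le k$. Therefore
\[ \ll y_1\cdots y_k\rr = \vvspan{\,y_1\cdots y_k\cdot y_{k+1}^{s_{k+1}}\cdots y_n^{s_n} : s_{k+1},\ldots,s_n\in\{0,1\}\,}, \]
a spanning set of size $2^{n-k}$.

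The step that needs the most care is linear independence of this spanning set. Suppose some $\F$-linear combination vanishes, i.e.
\[ y_1\cdots y_k\cdot\Bigl(\sum a_{s_{k+1},\ldots,s_n}\,y_{k+1}^{s_{k+1}}\cdots y_n^{s_n}\Bigr) = 0. \]
The factor $y_1\cdots y_k$ is nonzero (it is a nontrivial element of $B_2$, which is a basis by Lemma \ref{TwoBases}), and the bracketed factor is a polynomial in $x_{k+1},\ldots,x_n$ only, so the two factors are on disjoint variables. By Lemma \ref{FunLemma}, the bracketed factor must itself be zero; then by Lemma \ref{TwoBases} applied to $P[x_{k+1},\ldots,x_n]$, every coefficient $a_{s_{k+1},\ldots,s_n}$ vanishes.

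Combining, $\dim\ll y_1\cdots y_k\rr = 2^{n-k}$, and the rate equals $2^{n-k}/2^n = 1/2^k$. The only nontrivial step is the independence argument, and it reduces cleanly to the two lemmas already proved, so I expect no real obstacle.
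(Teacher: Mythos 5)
Your proof is correct, and it reaches the same basis for the ideal as the paper does, but by a slightly different route. The paper characterizes membership: it observes $\ll x_1+1\rr\cdots\ll x_k+1\rr\subseteq\ann{x_i+1}$ for each $i\le k$, then cites the argument inside Lemma~\ref{Annhilator} to force the $B_2$-coefficients with some $s_i=0$ ($i\le k$) to vanish, and finally checks the converse inclusion. You instead bypass Lemma~\ref{Annhilator} entirely: you write the ideal as the principal ideal $\ll y_1\cdots y_k\rr$, get a spanning set of size $2^{n-k}$ by a one-line computation with $y_i^2=0$, and then prove linear independence directly from Lemma~\ref{FunLemma} and Lemma~\ref{TwoBases}. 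Both proofs ultimately rest on those same two lemmas, but yours is a bit more self-contained (no detour through Lemma~\ref{Annhilator}'s internal argument), while the paper's phrasing mirrors the logic it had just set up for $\ann{x_i+1}=\ll x_i+1\rr$ and lets Proposition~\ref{CapMul} reuse the same pattern. Either way the argument is sound; just make sure the disjoint-variable appeal to Lemma~\ref{FunLemma} is stated as you have it, since that is where the independence really lives.
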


\begin{proof}
    Let $f\in P_n$, say $f = \sum{a_{s_1,\ldots,s_n}(x_1+1)^{s_1}\cdots(x_n+1)^{s_n}}.$
    If $f \in \ll x_1+1\rr\ll x_2+1\rr\cdots \ll x_k+1\rr$, then $f \in \ann{x_i+1}$ and thus
    $a_{s_1,\ldots,s_{i-1},0,s_{i+1},\ldots,s_n} = 0$ as in the proof of Lemma \ref{Annhilator} for $i = 1,\ldots,k$.
    Then $f = \sum{a_{1,\ldots,1,s_{k+1},\ldots,s_n}(x_1+1)\cdots(x_k+1)(x_{k+1}+1)^{s_{k+1}}\cdots(x_n+1)^{s_n}}$.
    On the other hand, every such $f$ lies in  $ \ll x_1+1\rr\ll x_2+1\rr\cdots \ll x_k+1\rr $. So the ideal has the following basis 
    $$\left\{(x_1+1)\cdots(x_k+1)(x_{k+1}+1)^{s_{k+1}}\cdots (x_n+1)^{s_n}\ |\ s_i\in \left\{0,1\right\}\right\}.$$
    The dimension of the ideal is $2^{n-k}$ and thus the rate is ${2^{n-k}}/{2^n} = {1}/{2^k}$.
    
\end{proof}

\begin{proposition}
    \label{CapMul}
    $\ll x_1+1 \rr\cdots \ll x_k+1 \rr\cap \ll x_{k+1}+1\rr = \ll x_1+1 \rr\cdots\ll x_k+1 \rr \ll x_{k+1}+1\rr$, for $1\leq k \leq n-1$.
\end{proposition}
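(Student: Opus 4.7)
The inclusion $\supseteq$ is immediate since the product ideal sits inside each factor and hence inside the intersection. So the plan is to prove the nontrivial inclusion $\subseteq$ by unpacking what it means for an element to lie in both ideals, using the basis $B_2$ from Lemma \ref{TwoBases} together with the annihilator identity from Lemma \ref{Annhilator}.

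First I would take an arbitrary $f$ in the LHS and expand it in the basis $B_2$, writing
\[
f = \sum a_{s_1,\ldots,s_n}(x_1+1)^{s_1}\cdots(x_n+1)^{s_n}.
\]
The argument in the proof of Proposition \ref{SimpleCase} applies verbatim to the first factor: membership in $\ll x_1+1\rr\cdots\ll x_k+1\rr$ forces $a_{s_1,\ldots,s_n}=0$ whenever any of $s_1,\ldots,s_k$ is zero, so the expansion of $f$ already has the prefactor $(x_1+1)\cdots(x_k+1)$.

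Next, by Lemma \ref{Annhilator} the condition $f\in\ll x_{k+1}+1\rr$ is equivalent to $(x_{k+1}+1)f=0$. Multiplying the reduced expansion by $(x_{k+1}+1)$ and using $(x_{k+1}+1)^2=0$ kills every term with $s_{k+1}=1$ and keeps those with $s_{k+1}=0$, giving
\[
(x_{k+1}+1)f = (x_1+1)\cdots(x_{k+1}+1)\sum_{s_{k+1}=0}a_{1,\ldots,1,0,s_{k+2},\ldots,s_n}(x_{k+2}+1)^{s_{k+2}}\cdots(x_n+1)^{s_n}=0.
\]
Since the product $(x_1+1)\cdots(x_{k+1}+1)$ is nonzero (by Lemma \ref{FunLemma} applied inductively, or directly from Lemma \ref{TwoBases}) and lives on variables disjoint from those of the sum on the right, a second application of Lemma \ref{FunLemma} forces the sum itself to vanish; then Lemma \ref{TwoBases} gives $a_{1,\ldots,1,0,s_{k+2},\ldots,s_n}=0$ for all choices of $s_{k+2},\ldots,s_n$.

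This leaves $f$ in the form
\[
f = (x_1+1)\cdots(x_{k+1}+1)\sum a_{1,\ldots,1,1,s_{k+2},\ldots,s_n}(x_{k+2}+1)^{s_{k+2}}\cdots(x_n+1)^{s_n},
\]
which is manifestly in $\ll x_1+1\rr\cdots\ll x_{k+1}+1\rr$, completing the proof. The main obstacle is really just the bookkeeping: one has to recognize that $\ll x_{k+1}+1\rr$ is at once a principal ideal and an annihilator, so that combining it with the other factor converts an intersection into the vanishing of certain basis coefficients, which is exactly the setting in which Lemma \ref{FunLemma} delivers the desired reduction.
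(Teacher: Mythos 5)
Your proof is correct and follows essentially the same route as the paper's: both reduce to the condition $(x_{k+1}+1)f = 0$ via Lemma \ref{Annhilator}, isolate a disjoint-variable factorization, and invoke Lemma \ref{FunLemma} to peel off the nonzero product $(x_1+1)\cdots(x_{k+1}+1)$. The only cosmetic difference is that you track the $B_2$ coefficients explicitly (re-using the argument from Proposition \ref{SimpleCase}), whereas the paper works with a factored form $f=(x_1+1)\cdots(x_k+1)g$ and reduces $g$ to the remaining variables using $(x_i+1)x_i=x_i+1$; the two phrasings are equivalent.
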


\begin{proof}
    The inclusion ``$\supseteq$'' is obvious. Let $f \in \ll x_1+1 \rr\cdots \ll x_k+1 \rr\cap \ll x_{k+1}+1\rr$. Then we can write $f = (x_1+1)\cdots(x_k+1)g$ for some $g \in P_n$. Applying the fact that $(x_i+1)x_i=x_i+1$, we have $(x_1+1)\cdots(x_k+1)g = (x_1+1)\cdots(x_k+1)g(x_1=1,\cdots,x_k=1)$, so without loss of generality we can assume $g$ is only on variables $x_{k+1},\ldots,x_n$. As $f \in \ll x_{k+1}+1\rr$, we have $(x_{k+1}+1)f = (x_1+1)\cdots(x_k+1)(x_{k+1}+1)g = 0$. Hence by Lemma \ref{FunLemma} $(x_{k+1}+1)g = 0$ and by Lemma \ref{Annhilator} $g\in \ll x_{k+1}+1 \rr$. We are done.
\end{proof}

\begin{theorem}
    \label{RateAdd}
    The rate of the ideal $\ll x_1+1\rr+\ll x_2+1 \rr+\cdots+\ll x_k+1 \rr$ is $1-{1}/{2^k}$, for $1\leq k\leq n$.
\end{theorem}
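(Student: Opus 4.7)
The plan is to compute $\dim I_k$ for $I_k := \ll x_1+1\rr+\cdots+\ll x_k+1\rr$ via inclusion-exclusion, using Proposition \ref{SimpleCase} to compute each term. The key preliminary fact is that for any subset $T=\{i_1,\ldots,i_j\}\subseteq\{1,\ldots,k\}$ one has
\[ \ll x_{i_1}+1\rr\cap \ll x_{i_2}+1\rr\cap\cdots\cap \ll x_{i_j}+1\rr = \ll x_{i_1}+1\rr\ll x_{i_2}+1\rr\cdots\ll x_{i_j}+1\rr.\]
This generalizes Proposition \ref{CapMul} from ``product of $k$ intersected with one more'' to ``intersection of $j$ arbitrary ones''. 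I would prove it by induction on $j$: the $j=2$ case is Proposition \ref{CapMul} (with a relabeling of variables, since the proof there is symmetric in the choice of the singled-out index); for the inductive step, by the inductive hypothesis the intersection of the first $j-1$ ideals equals their product, and then Proposition \ref{CapMul} (again up to relabeling) lets me intersect with $\ll x_{i_j}+1\rr$ to absorb it into the product.

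Once this is established, Proposition \ref{SimpleCase} gives that each such $j$-fold intersection has dimension $2^{n-j}$. Inclusion-exclusion for the dimension of a sum of subspaces then yields
\[ \dim I_k \;=\; \sum_{j=1}^{k}(-1)^{j+1}\binom{k}{j}\dim\bigl(\ll x_{i_1}+1\rr\cdots\ll x_{i_j}+1\rr\bigr) \;=\; \sum_{j=1}^{k}(-1)^{j+1}\binom{k}{j}2^{n-j}.\]
Factoring out $2^n$ and applying the binomial theorem,
\[ \dim I_k \;=\; 2^n\Bigl(1-\sum_{j=0}^{k}\binom{k}{j}(-1/2)^j\Bigr) \;=\; 2^n\Bigl(1-(1-1/2)^k\Bigr) \;=\; 2^n\bigl(1-1/2^k\bigr),\]
so $R(I_k) = 1-1/2^k$, as claimed.

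The only nonroutine step is the extension of Proposition \ref{CapMul} to arbitrary subfamilies, but this is not really a serious obstacle: the earlier proof of Proposition \ref{CapMul} uses only the identities $(x_i+1)x_i=x_i+1$ and Lemmas \ref{FunLemma}, \ref{Annhilator}, all of which are symmetric in the indexing, so the induction goes through mechanically. As an alternative route one could instead verify directly that $\ann{I_k}=\ll x_1+1\rr\cdots\ll x_k+1\rr$ and invoke the self-injectivity of the group algebra $P_n$ to conclude $\dim I_k+\dim\ann{I_k}=2^n$; but the inclusion-exclusion argument above keeps everything inside the tools already developed in the paper and avoids appealing to Frobenius-algebra machinery.
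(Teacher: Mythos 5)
Your proposal follows the same route as the paper's proof: convert intersections to products via Proposition~\ref{CapMul}, compute dimensions via Proposition~\ref{SimpleCase}, and apply inclusion-exclusion plus the binomial theorem. You are also right to flag that Proposition~\ref{CapMul} as stated treats only one particular arrangement of indices and needs a short relabeling/induction to cover arbitrary subfamilies $\{i_1,\ldots,i_s\}$; the paper cites Proposition~\ref{CapMul} directly and elides this point.

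There is, however, one step that both you and the paper take for granted and that deserves justification: inclusion-exclusion for dimensions is \emph{not} a general fact about families of vector subspaces. (Already for the three distinct lines through the origin in $\F^2$, the sum has dimension $2$, while the naive inclusion-exclusion count gives $3\cdot 1 - 3\cdot 0 + 0 = 3$.) It does hold in the present situation, but for a reason that ought to be stated explicitly: each ideal $\ll x_i+1\rr$ is spanned by the subset $A_i=\left\{(x_1+1)^{s_1}\cdots(x_n+1)^{s_n}\mid s_i=1\right\}$ of the common basis $B_2$ from Lemma~\ref{TwoBases}, and by Proposition~\ref{SimpleCase} (together with the generalized Proposition~\ref{CapMul}) the intersection $\bigcap_{j}\ll x_{i_j}+1\rr$ is spanned by $\bigcap_j A_{i_j}$, while the sum $\sum_i\ll x_i+1\rr$ is spanned by $\bigcup_i A_i$. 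Since all the relevant subspaces are spanned by subsets of one fixed basis, the dimension identity reduces to ordinary set-theoretic inclusion-exclusion, which is what makes the computation legitimate. Your alternative route --- showing $\ann{I_k}=\ll x_1+1\rr\cdots\ll x_k+1\rr$ and invoking the Frobenius property of $P_n$ to get $\dim I_k+\dim\ann{I_k}=2^n$ --- would sidestep this issue cleanly, at the cost of importing machinery the paper does not otherwise use; it is a reasonable trade-off.
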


\begin{proof}
    Recall that for two vector subspaces $V$ and $W$ of $P_n$, we have $\dim (V+W) = \dim V + \dim W -\dim V\cap W$. Applying this formula, we have 
    \begin{equation*}
    \begin{aligned}
        \dim \sum_{i=1}^k\ll x_i+1\rr &= \sum_{s=1}^k{(-1)^{s+1}{\sum_{i_1<\cdots<i_s}}{\dim \ll x_{i_1}+1\rr\cap\cdots\cap\ll x_{i_s}+1 \rr}}\\
        &=\sum_{s=1}^k{(-1)^{s+1}\sum_{i_1<\cdots<i_s}}{\dim \ll x_{i_1}+1\rr\cdots\ll x_{i_s}+1 \rr},
    \end{aligned}
    \end{equation*}
    where we have used Proposition \ref{CapMul} in the last equality. Dividing both sides by the dimension of the ambient ring and applying Proposition \ref{SimpleCase}, we obtain
    \[R\left(\sum_{i=1}^k\ll x_i+1\rr\right)=\sum_{s=1}^k(-1)^{s+1}\binom{k}{s}\frac{1}{2^s}=1-\frac{1}{2^k}.\]
\end{proof}

\begin{lemma}
    \label{ProductLI}
    Let $\left\{f_i(x_1,\ldots,x_r)\right\}_{i = 1,\ldots,k},\left\{g_j(y_1,\ldots,y_s)\right\}_{j = 1,\ldots,\ell} $ be two sets of elements on disjoint variables in $ P[x_1,\ldots,x_r,y_1,\ldots,y_s]$. If these are two linearly independent sets, then the set $\left\{f_i g_j\,\vert\,i = 1,\ldots,k,j = 1,\ldots,\ell\right\}$ is linearly independent.
\end{lemma}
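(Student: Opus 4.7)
The plan is to assume a linear dependence $\sum_{i,j} c_{ij} f_i g_j = 0$ with $c_{ij}\in\F$ and to conclude that every $c_{ij}$ vanishes by peeling off the $y$-side first and then the $x$-side. The main tool is Lemma \ref{TwoBases} applied to all $r+s$ variables at once: the monomial products $\{x^\alpha y^\beta : \alpha\in\{0,1\}^r,\ \beta\in\{0,1\}^s\}$ form an $\F$-basis of $P[x_1,\ldots,x_r,y_1,\ldots,y_s]$.

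First I would expand each $f_i = \sum_\alpha a_{i,\alpha} x^\alpha$ and each $g_j = \sum_\beta b_{j,\beta} y^\beta$ in their respective monomial bases, substitute into the supposed dependence, and collect terms to obtain
\[
\sum_{\alpha,\beta}\Bigl(\sum_{i,j} c_{ij}\, a_{i,\alpha}\, b_{j,\beta}\Bigr) x^\alpha y^\beta \;=\; 0.
\]
Linear independence of the $x^\alpha y^\beta$ then forces each bracket to vanish: $\sum_{i,j} c_{ij}\, a_{i,\alpha}\, b_{j,\beta} = 0$ for every pair $(\alpha,\beta)$.

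Next, fixing $\alpha$ and regrouping by $j$, the identity $\sum_j \bigl(\sum_i c_{ij} a_{i,\alpha}\bigr) b_{j,\beta} = 0$ holding for all $\beta$ is exactly the assertion that $\sum_j \bigl(\sum_i c_{ij} a_{i,\alpha}\bigr) g_j = 0$ in $P[y_1,\ldots,y_s]$. The linear independence of $\{g_j\}$ then gives $\sum_i c_{ij} a_{i,\alpha} = 0$ for every $\alpha$ and every $j$. Fixing $j$ and repackaging across $\alpha$, this is precisely $\sum_i c_{ij} f_i = 0$ in $P[x_1,\ldots,x_r]$, so the linear independence of $\{f_i\}$ yields $c_{ij} = 0$.

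The whole argument is essentially a bilinearity/tensor-product computation, and I do not expect a serious obstacle. The only point that needs a moment of care is invoking Lemma \ref{TwoBases} in the ambient ring on all $r+s$ variables at once (rather than in the two smaller rings separately) so that $\{x^\alpha y^\beta\}$ can be used as a basis; once that is in hand, both extractions of scalars are routine applications of the hypothesized linear independence.
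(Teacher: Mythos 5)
Your proof is correct and is essentially the same argument the paper gives: both expand $f_i$ and $g_j$ in the monomial bases, use the fact that $\{x^\alpha y^\beta\}$ is a basis of the ambient ring to reduce the dependence to the scalar identity $\sum_{i,j} c_{ij}\,a_{i,\alpha}\,b_{j,\beta}=0$, and then extract $c_{ij}=0$ in two stages via linear independence of $\{f_i\}$ and $\{g_j\}$. The only difference is cosmetic: you peel off the $y$-side first while the paper peels the $x$-side first (the paper phrases it via linear independence of the coefficient vectors $[a_{i1}\cdots a_{in}]$, which is the same thing as linear independence of the $f_i$).
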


\begin{proof}
    For convenience, we denote 
    \[\left\{v_1,\ldots,v_n\right\} = \left\{x_1^{t_1}\cdots x_r^{t_r}\,\vert \, t_1,\ldots,t_r \in \left\{0,1\right\}\right\}, \]
    \[\left\{w_1,\ldots,w_m\right\} = \left\{y_1^{t_1}\cdots y_s^{t_s}\,\vert \, t_1,\ldots,t_s \in \left\{0,1\right\}\right\}.\]
    Then for $i=1,\ldots,k$ and $j=1,\ldots,\ell$, we have $f_i = \sum_{c=1}^n{a_{ic}v_c}$ and $g_j = \sum_{e=1}^m{b_{je}w_e}$, where $a_{ic},b_{je} \in \F$.
    Assume a zero linear combination $\sum_{i,j}{\lambda_{ij}f_ig_j}=0$ for some $\lambda_{ij} \in \F$. Then
    \[ 0 = \sum_{i,j}{\lambda_{ij}f_ig_j}=\sum_{i,j}{\lambda_{ij} \sum_{c=1}^n{a_{ic}v_c} \sum_{e=1}^m{b_{je}w_e}}= \sum_{c,e}{\sum_{i,j}{\lambda_{ij}a_{ic}b_{je}\cdot v_c w_e}}.\]
    Note that $\left\{v_c w_e \,\vert \, c=1,\ldots,n,e = 1,\ldots,m\right\}$ is the first basis in Lemma \ref{TwoBases}, so we have 
    \[\sum_{i,j}{\lambda_{ij}a_{ic}b_{je}} = \sum_i{a_{ic}\sum_j{\lambda_{ij}b_{je}}} = 0,\  c=1,2,\ldots,n,\ e=1,2,\ldots,m.\]
    Since $\left\{f_i\right\}$ are linearly independent, the coefficient vectors $\left\{[a_{i1}\ a_{i2}\ \cdots\ a_{in}]\ |\ i=1,\ldots,k\right\}$ are also linearly independent. Then $\sum_j{\lambda_{ij}b_{je}} = 0$ for all $i$ and $e$, which further implies $\lambda_{ij}=0$ for all $i$ and $j$ as $\left\{g_j\right\}$ are linearly independent. We are done.
\end{proof}

\begin{theorem}
    \label{RateMul}
    Let $\left\{f_i(x_1,\ldots,x_r)\right\}_{i = 1,\ldots,k},\left\{g_j(y_1,\ldots,y_s)\right\}_{j = 1,\ldots,\ell} $ be two sets of elements on disjoint variables in $ P[x_1,\ldots,x_r,y_1,\ldots,y_s]$. Then we have a rate formula $R(\alpha\beta) = R(\alpha) R(\beta)$, where $\alpha =  \ll f_1,\ldots,f_k\rr$ and $ \beta =\ll g_1 ,\ldots,g_\ell\rr$.
\end{theorem}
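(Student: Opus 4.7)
The plan is to compute the $\F$-dimensions of $\alpha$, $\beta$, and $\alpha\beta$ as subspaces of $P[x_1,\ldots,x_r,y_1,\ldots,y_s]$ by exhibiting explicit $\F$-bases built from bases of the ``small-ring'' ideals, and then divide. Introduce $\alpha_x := \ll f_1,\ldots,f_k\rr \subseteq P[x_1,\ldots,x_r]$ and $\beta_y := \ll g_1,\ldots,g_\ell\rr \subseteq P[y_1,\ldots,y_s]$, and fix $\F$-bases $\{u_1,\ldots,u_a\}$ of $\alpha_x$ and $\{v_1,\ldots,v_b\}$ of $\beta_y$. Lemma \ref{ProductLI} will do essentially all of the heavy lifting.

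First I would show $\dim_{\F}\alpha = a\cdot 2^s$, hence $R(\alpha) = a/2^r$ (and symmetrically $R(\beta) = b/2^s$). A general element of $\alpha$ has the form $\sum_i h_i f_i$ with $h_i \in P[x_1,\ldots,x_r,y_1,\ldots,y_s]$; expanding each $h_i$ in the $y$-monomial basis and collecting by $y$-monomial shows that $\alpha$ is $\F$-spanned by $\{u_i\cdot y_1^{t_1}\cdots y_s^{t_s} : 1\leq i\leq a,\ (t_1,\ldots,t_s)\in \{0,1\}^s\}$. These $a\cdot 2^s$ elements are linearly independent by Lemma \ref{ProductLI} applied to the independent sets $\{u_i\}$ and $\{y_1^{t_1}\cdots y_s^{t_s}\}$, which live on disjoint variables.

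Next I would show that $\{u_i v_j : 1\leq i\leq a,\ 1\leq j\leq b\}$ is an $\F$-basis of $\alpha\beta$, giving $\dim_{\F}\alpha\beta = ab$. Linear independence is another direct application of Lemma \ref{ProductLI}. For spanning, recall that $\alpha\beta$ consists of finite sums of products $AB$ with $A\in\alpha$, $B\in\beta$; by commutativity any such element has the form $\sum_{i,j} h_{ij} f_i g_j$ with $h_{ij} \in P[x_1,\ldots,x_r,y_1,\ldots,y_s]$. Expanding each $h_{ij}$ in the monomial basis $\{x_1^{\mu_1}\cdots x_r^{\mu_r}y_1^{\nu_1}\cdots y_s^{\nu_s}\}$ and using the disjoint-variable factorization $(x^\mu y^\nu)(f_i g_j) = (x^\mu f_i)(y^\nu g_j)$, each term becomes (element of $\alpha_x$)$\cdot$(element of $\beta_y$), which lies in the $\F$-span of $\{u_i v_j\}$. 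Combining the two dimension counts gives $R(\alpha\beta) = ab/2^{r+s} = (a/2^r)(b/2^s) = R(\alpha)R(\beta)$.

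The main obstacle is the spanning step for $\alpha\beta$: the coefficients $h_{ij}$ come from the full ring $P[x_1,\ldots,x_r,y_1,\ldots,y_s]$, so one has to cleanly separate each monomial across the two variable blocks before Lemma \ref{ProductLI} can be invoked. Once that variable separation is in place, the whole argument is essentially a tensor-product dimension count over $\F$.
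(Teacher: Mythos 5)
Your proof is correct and follows essentially the same route as the paper: both arguments reduce to a tensor-product dimension count, with Lemma~\ref{ProductLI} supplying linear independence of the product basis. The only (cosmetic) difference is that you work with $\alpha_x = \ll f_1,\ldots,f_k\rr$ computed inside the subring $P[x_1,\ldots,x_r]$, whereas the paper uses $V = \alpha \cap P[x_1,\ldots,x_r]$; these coincide (expand the $P[x,y]$-coefficients of a representation $\sum_i h_i f_i$ in the $y$-monomial basis, which is a free $P[x]$-basis of $P[x,y]$, and pick off the constant coefficient), and your explicit monomial-expansion of the $h_{ij}$ is exactly the mechanism the paper uses, just spelled out a bit more concretely than its $d_{ij}=\sum_k a_{ijk}b_{ijk}$ step.
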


\begin{proof}
    Let $ V= \alpha \cap P[x_1,\ldots,x_r]$ and $ W =  \beta\cap P[y_1,\ldots,y_s]$.
    Then $V$ and $W$ are vector spaces and we assume $\left\{v_1,v_2,\ldots,v_n\right\}$ and $\left\{w_1,w_2,\ldots,w_m\right\}$ are their bases, respectively. Define the tensor product $V\otimes W$ to be the $\F$-span of $\left\{v_cw_e\,\vert\,c = 1,\ldots,n,e = 1,\ldots,m\right\}$. By Lemma \ref{ProductLI} $\left\{v_cw_e\right\}$ are linearly independent and
    thus $\dim V\otimes W = \dim V\times \dim W =  n\times m$.
    
    We claim that $\alpha\beta = V\otimes W$. The inclusion ``$\supseteq$" is clear. Let $h\in \alpha\beta$, say $h = \sum_{ij}d_{ij}\cdot f_ig_j$, where $d_{ij}\in P_{r+s}$. 
    For each $d_{ij}$, we can write $d_{ij} = \sum_k{a_{ijk}}b_{ijk}$, where $a_{ijk}\in P[x_1,\ldots,x_r],b_{ijk}\in P[y_1,\cdots,y_s]$. Hence 
    \[
    h = \sum_{ij}\sum_k{a_{ijk}}b_{ijk}\cdot f_ig_j = \sum_k\sum_{ij}a_{ijk}f_i\cdot b_{ijk}g_j.
    \]
    Note that $a_{ijk}f_i \in V$ and $b_{ijk}g_j\in W$. Then $a_{ijk}f_i\cdot b_{ijk}g_j \in V\otimes W$ and so is $h$, thus $\alpha\beta = V\otimes W$. Hence $\dim \alpha\beta = n\times m$.

    Similarly, we have $\alpha = V\otimes P[y_1,\ldots,y_s]$ and thus $\dim \alpha = \dim V\times\dim P[y_1,\ldots,y_s] = n\times 2^s$, obtain $R(\alpha) = {\dim \alpha}/{2^{r+s}} = {n}/{2^r}$. Similarly, $R(\beta)={m}/{2^s}$.
    Finally, we obtain  
    $$ R(\alpha\beta) = \frac{\dim \alpha\beta}{2^{r+s}}=\frac{n\times m}{2^r\times2^s}=R(\alpha) R(\beta).$$
        
\end{proof}

\begin{corollary}
    \label{corollary}
    $R \left(\prod_{i = 1}^{n}{\sum_{j = 1}^{k_i}{\ll x_{ij}+1\rr}}\right) 
    =\prod_{i = 1}^{n}{R\left(\sum_{j = 1}^{k_i}{\ll x_{ij}+1\rr}\right)}$.
\end{corollary}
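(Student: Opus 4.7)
The plan is to prove the corollary by induction on $n$, with Theorem \ref{RateMul} doing the heavy lifting at each step. The essential observation is that the ideal $\sum_{j=1}^{k_i}\ll x_{ij}+1\rr$ coincides with the finitely generated ideal $\ll x_{i1}+1, x_{i2}+1, \ldots, x_{ik_i}+1\rr$, whose generators involve only the variables $\{x_{ij}\}_{j=1}^{k_i}$. Under the standing assumption that the variables $x_{ij}$ for different $i$ are distinct, the generators of the ideals for different $i$ lie in disjoint variable subrings, which is exactly the hypothesis Theorem \ref{RateMul} requires.

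The base case $n=1$ is a tautology. For the inductive step, write $\alpha_i := \sum_{j=1}^{k_i}\ll x_{ij}+1\rr$ and factor the product as $\alpha_1 \cdot \bigl(\prod_{i=2}^n \alpha_i\bigr)$. The ideal $\prod_{i=2}^n \alpha_i$ is generated by the finite set of products $\prod_{i=2}^{n}(x_{i,j_i}+1)$ ranging over choices $(j_2,\ldots,j_n)$ with $j_i \in \{1,\ldots,k_i\}$, all of which lie in the subring $P[x_{ij} : 2\le i\le n]$. Since $\alpha_1$ is generated by elements in the disjoint subring $P[x_{1j} : 1\le j\le k_1]$, Theorem \ref{RateMul} yields
\[R\!\left(\prod_{i=1}^n \alpha_i\right) = R(\alpha_1)\cdot R\!\left(\prod_{i=2}^n \alpha_i\right).\]
Applying the inductive hypothesis to the second factor then produces the desired product formula.

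The only subtlety, and hence the main point to verify carefully, is the claim that the product ideal $\prod_{i=2}^n \alpha_i$ admits a generating set entirely contained in the subring $P[x_{ij} : 2\le i\le n]$. This is what allows Theorem \ref{RateMul} to be invoked in the inductive step, rather than just on a single pair of factors. The justification is routine: a product of ideals is generated by products of their individual generators, and a product of polynomials each depending only on variables from $\{x_{ij} : 2\le i\le n\}$ still depends only on those variables. Beyond this bookkeeping, no ideas beyond Theorem \ref{RateMul} are required.
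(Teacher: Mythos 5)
Your proposal is correct and is exactly the argument the paper has in mind; the paper's own proof consists of the single sentence ``This follows from Theorem \ref{RateMul} by induction,'' and your write-up correctly supplies the details of that induction. In particular, you correctly identify the one nontrivial bookkeeping point: to apply Theorem \ref{RateMul} in the inductive step, the tail product $\prod_{i=2}^n \alpha_i$ must be exhibited as an ideal generated by elements supported on a variable set disjoint from that of $\alpha_1$, which holds because a product of finitely generated ideals is generated by products of generators, and those products involve only the variables $x_{ij}$ with $i\geq 2$. No gap.
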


\begin{proof}
    This follows from Theorem \ref{RateMul} by induction.
\end{proof}

\vspace{0.2in}
\noindent{\bf Acknowledgements.} While we are finishing writing up this paper, we became aware of \cite{bargmsch}, in which the authors construct a family of storage codes of rate asymptotically one. Since both the construction in the current paper and the construction in \cite{bargmsch} are generalizing the Hamming family, they look similar. However the methods used to compute the rates of storage codes are very different.


{\small
Department of Mathematics and National Center for Applied Mathematics Shenzhen, Southern University of Science and Technology, Shenzhen 518055, China 

\text{Email address:} {\tt hhxiang1999@foxmail.com}

\vspace{0.2cm}
Department of Mathematics and Shenzhen International Center of Mathematics, Southern University of Science and Technology, Shenzhen 518055, China

\text{Email address:} {\tt xiangq@sustech.edu.cn}
}

\end{document}